\documentclass[12pt]{amsart}
\usepackage{epic,latexsym,amsbsy,amsmath,amscd,amssymb}
\pagestyle{myheadings}


\setlength{\textwidth}{16cm}
\setlength{\textheight}{23cm}   
\setlength{\oddsidemargin}{0.0cm}
\setlength{\evensidemargin}{0.0cm}

\newcommand\AND{\quad\text{and}\quad}

\newcommand\bd{\partial}

\newcommand\Bss{\ {\buildrel \approx \over B}}
\newcommand\C{\mathbb C}
\newcommand\Ccal{\mathcal C}
\newcommand\Dcal{\mathcal D}

\newcommand\diam{\operatorname{\rm diam}}

\newcommand\dsf{\mathsf{d}} 
\newcommand\Ecal{\mathcal E}
\newcommand\ee{\mathbf{e}}
\newcommand\EE{E}  
\newcommand\ep{\epsilon}
\newcommand\F{\mathbb F}
\newcommand\ff{\mathbf{f}}
 

\newcommand\impliess{\buildrel * \over \implies}
\newcommand\K{\mathbb K}
\newcommand\la{\lambda}
\newcommand\N{\mathbb N}
\newcommand\ol{\overline}
\newcommand\Pb{\mathbf{P}}

\newcommand\Pol{\mathfrak P}

\newcommand\pre{\preccurlyeq}
\newcommand\Prob{\mathsf{Pr}}

\newcommand\qss{\ {\buildrel \approx \over q}}
\newcommand\Qss{\ {\buildrel \approx \over Q}}
\newcommand\R{\mathbb R}

\newcommand\si{\sigma}
\newcommand\Si{\mathbf{\Sigma}}

\newcommand\Tcal{\mathcal T}
\newcommand\then{\!\!\implies\!\!}
\newcommand\thens{\!\!\impliess\!\!}
\newcommand\tos{\buildrel * \over \to}
\newcommand\uno{\mathbf{1}}
\newcommand\wt{\widetilde}
\newcommand\V{\mathbf{V}}
\newcommand\VV{X}   
\newcommand\vv{\mathbf v} 
\newcommand\ww{\mathbf w} 
\newcommand\XX{X}   
\newcommand\yy{\mathbf y} 

\newcommand\zf{\mathfrak{z}}

\numberwithin{equation}{section}

\newtheoremstyle{mythm}
  {9pt}
  {9pt}
  {\itshape}
  {0pt}
  {\bfseries}
  {}
  { }
  {\thmnumber{(#2)}\thmname{ #1}\thmnote{ #3}}

\newtheoremstyle{mydef}
  {9pt}
  {9pt}
  {\normalfont}
  {0pt}
  {\bfseries}
  {}
  { }
  {\thmnumber{(#2)}\thmname{ #1}\thmnote{ #3}}

\theoremstyle{mythm}
\newtheorem{thm}[equation]{Theorem.}
\newtheorem{pro}[equation]{Proposition.}
\newtheorem{lem}[equation]{Lemma.}
\newtheorem{cor}[equation]{Corollary.}

\theoremstyle{mydef}
\newtheorem{dfn}[equation]{Definition.}

\newtheorem{ass}[equation]{Assumptions.}
\newtheorem{rmk}[equation]{Remark.}

\begin{document}$\,$ \vspace{-1truecm}
\title{Context-free pairs of groups \\ II - cuts, tree
sets, and random walks}
\author{\bf Wolfgang WOESS}
\address{\parbox{.8\linewidth}{Institut f\"ur Mathematische Strukturtheorie, 
\\ Technische Universit\"at Graz,\\
Steyrergasse 30, 8010 Graz, Austria\\}}
\email{woess@TUGraz.at}
\date{\today} 
\thanks{The author was supported by the
Austrian Science Fund project FWF-P19115-N18}
\subjclass[2000] {05C25,  
                  60G50,  
                  68Q45,  
		  20F10. 
		  }
\keywords{Finitely generated pair of groups, context-free
grammar,  context-free graph, cut, tree set, random walk}

\begin{abstract} This is a continuation of the study, begun by {\sc
Ceccherini-Silberstein and Woess}~\cite{CeWo3}, of context-free pairs of groups
and the related context-free graphs in the sense of 
{\sc Muller and Schupp}~\cite{MS2}. Instead of the cones (connected 
components with respect to deletion of finite balls with respect to the graph
metric), a more general approach to context-free graphs is proposed
via tree sets consisting of cuts of the graph, and associated structure trees. 
The existence of tree sets with certain ``good'' properties is studied.
With a tree set, a natural context-free grammar is associated. These
investigations of the structure of context free pairs, resp. graphs are then
applied to study random walk asymptotics via complex analysis.
\end{abstract}

\maketitle

\markboth{{\sf W. Woess}}
{{\sf Context-free pairs of groups. II}}
\baselineskip 15pt

\section{Introduction and preliminaries}\label{sec:intro}

This is a direct continuation of the paper of 
{\sc Ceccherini and Woess}~\cite{CeWo3}. 

Let us briefly outline the setting. 
We have a finitely generated group $G$, a subgroup $K$ and a
a finite \emph{alphabet} $\Si$ together with a mapping $\psi: \Si \to G$
such that $A = \psi(\Si)$ generates $G$ as a semigroup. We call $\psi$ a 
\emph{semigroup presentation} of $G$.
We also write $\psi$ for the extension of this mapping as a monoid
homomorphism $\psi: \Si^* \to G$, where $\Si^*$ consists of all words over
$\Si$, with word concatenation as the semigroup product and the empty word 
$\epsilon$ as the unit element. 
The pair $(G,K)$ is called \emph{context-free} if the language
$
L(G,K,\psi) = \{ w \in \Si^* : \psi(w) \in K \}
$
is context-free. This is independent of the specific choices of
$\Si$ and $\psi$, see \cite[Lemma 3.1]{CeWo3}.

Let us recall here that a context-free language is a subset
of $\Si^*$ that is generated by a \emph{context-free grammar} 
$\Ccal = (\V,\Si,\Pb,S)$, 
where  $\V$ is the (finite) set of \emph{variables} (with 
$\V\cap \Si = \emptyset$), the 
variable $S$ is the \emph{start symbol,} and $\Pb \subset \V \times (\V \cup \Si)^*$ is a 
finite set of \emph{production rules.} We write $T \vdash u$ 
if $(T,u) \in \Pb$.  For $v, w \in (\V \cup \Si)^*$, a \emph{rightmost 
derivation step} has the form $v \then w$, where $v = v_1Tv_2$ and 
$w=v_1uv_2$ with 
$u, v_1 \in (\V \cup \Si)^*$, $v_2 \in \Si^*$ and $T \vdash u$. 
A \emph{rightmost derivation} is a sequence
 $v=w_0,w_1, \dots,w_k=w \in (\V \cup \Si)^*$ such 
that $w_{i-1} \then w_i\,$; we then write $v \thens w$. 
Each $T \in \V$ generates the language  
$L_T = \{ w \in \Si^* : T \thens w \}$. The \emph{language generated by} 
$\Ccal$ is $L(\Ccal) =L_S$. The grammar is called \emph{un-ambiguous,} if
every $w \in  L(\Ccal)$ has a unique rightmost derivation.

{\sc Harrison~\cite{Ha}} is an excellent source on context-free languages.

The group $G$ itself is called context-free, if the language
$L(G,\{ 1_G \},\psi)$ is context-free.
{\sc Muller and Schupp}~\cite{MS1} have shown that a group is context-free if
and only if it is virtually free. 
Subsequently, in \cite{MS2} they have introduced context-free \emph{graphs},
which play a crucial role in the present work. 
For additional references related to the subject of this paper, see also
\cite{CeWo3}. The graphs that we are dealing with are \emph{Schreier graphs}
of $(G,K)$. 

\smallskip

\emph{In the present paper, we shall always assume that the index of $K$ in $G$,
as well as the graphs in consideration, are infinite.}

\smallskip

Before proceeding with the introductory outline, let us include some
quick reminders regarding those graphs. 

\smallskip

Let $\Si$ be a finite alphabet. 
A \emph{directed graph $(\VV, \EE, \ell)$ labelled by $\Si$} consists of the 
(finite or countable) set of \emph{vertices,} the set of \emph{oriented, 
labelled edges} $\EE \subset \VV \times \Si \times \VV$ and the 
\emph{labelling} $\ell :\EE \ni (x,a,y) \mapsto a \in \Si$. 
Loops are allowed. Multiple edges between the same vertices must have
distinct labels. All our graphs will be \emph{fully deterministic:} for every vertex 
$x$ and label $a \in \Si$, there is precisely one edge with label $a$ starting
at $x$. Our graphs will usually also be \emph{symmetric (undirected):} 
there is a proper involution $a \mapsto a^{-1}$ of $\Si$ such that for
$e=(x,a,y) \in \EE$, also $e^{-1}=(y,a^{-1},x)$ belongs to $\EE$. 

A \emph{path} in $\XX$ is a sequence $\pi = e_1 e_2 \dots e_n$ of edges such 
that the terminal vertex of $e_i$ is the intial vertex of $e_{i+1}$ for 
$i=1,\ldots, n-1$. Its \emph{label} is 
$\ell(\pi) = \ell(e_1) \ell(e_2) \cdots \ell(e_n) \in \Si^*$. The \emph{empty 
path} starting and ending at $x$ has label $\epsilon$. 
We only consider graphs that are \emph{strongly connected:} for all $x,y \in
\XX$, there is a path from $x$ to $y$. 
With any two vertices $x,y \in \XX$, we associate the language of all words
that can be read along some path from $x$ to $y$, that is,
$L_{x,y} = \{ \ell(\pi) : \pi \;\text{a path from $x$ to $y$}\}$.

The \emph{Schreier graph} $\XX = \XX(G,K,\psi)$ of $(G,K)$ with respect to
$\psi$ has the vertex set $\VV=\{ Kg: g \in G \}$ of all right $K$-cosets 
in $G$, and the set of labelled, directed
edges 
$
\EE = \{e = (x,a,y): x = Kg\,,\; y = Kg\psi(a)\,,\;\text{where}\; 
g \in G\,,\; a \in \Si\}\,.
$
The vertex $o = K$ serves as a root of $\XX$. When $\psi$ is \emph{symmetric}, 
that is, there is a proper involution 
$a \mapsto a^{-1}$ of $\Si$ such that $\psi(a^{-1}) = \psi(a)^{-1}$ then the
Schreier graph is symmetric.

We have that $L(G,K,\psi)$ is context-free if and only if the language $L_{o,o}$
associated with $\XX(G,K,\psi)$ has this property. 

We describe the concept of context-free graphs. Let $\XX$ be a labelled,
symmetric graph with a chosen root vertex $o$. By symmetry, it has its natural 
graph metric $d$. Let $B(o,n) = \{ x \in \VV : d(x,o)\le n\}$ be the ball with 
radius $n$ centred at $o$. Then a \emph{cone} of $\XX$
with respect to $o$ is a connected component of $\XX \setminus B(o,n)$ with
$n \ge 0$. Each cone is a labelled graph with its boundary consisting of
all its vertices having a neighbour in the complement. 
Then $\XX$ is called a \emph{context-free graph} in \cite{MS2}, if there are 
only finitely many isomorphism types, as labelled graphs with boundary, 
of cones with respect to $o$.

It is shown in  \cite{CeWo3} that a fully deterministic, symmetric graph
$X$ with any chosen root vertex $o$ is a context-free graph in the above sense 
if and only if $L_{o,o}$ is a context-free language. (Contrary to what one 
tends to believe at first glance, this is not contained in  \cite{MS1} and
\cite{MS2}, the ``if'' part being the harder one.)

\smallskip

Let us come back to the introductory outline.
In the present paper, we first introduce a more general approach to context-free graphs,
via \emph{oriented tree sets} consisting of \emph{cuts}, and associated
\emph{structure trees} in the spirit of {\sc Dunwoody}~\cite{Du1}, \cite{Du2} and 
{\sc Dicks and Dunwoody}~\cite{DiDu}; see also 
{\sc Thomassen and Woess}~\cite{ThWo}. 
Our tree sets have \emph{finite type} (the cuts which they contain 
belong to finitely many isomorphism classes) and \emph{tesselate} the underlying 
graph  (see below for precise explanations). 
We first perform a detailed study of those tree sets in our graphs, 
showing that 
they can always be modified so that they have certain connectivity
and separation properties.  A crucial additional property, \emph{irreducibility,} is 
introduced. For Schreier graphs it is shown that irreducibility is preserved 
under finite index extensions of the group $G$. Also, 
the existence of a tree set with those properties is independent of the 
presentation map~$\psi$.
As a particular, interesting class of of examples, all those ``good''
properties, including irreducibility, hold when $G$ is virtually free and
$K$ is a finitely generated free subgroup.

With a tree set with finite type that tesselates the Schreier graph of $(G,K)$ 
with respect to $\psi$, we can associate
in a natural way an un-ambiguous context-free grammar that generates 
$L(G,K,\psi)$. The above properties of the tree set translate into 
properties of the grammar and its \emph{dependency digraph.}

In the final part, we change the flavour from structure-theoretic
considerations to random walks and the analysis of generating functions.
We start with a fully deterministic, symmetric graph and equip $\Si$ with a
probability measure $\mu$. This induces a random walk on $\XX$, where 
a random step from a vertex $x$ along an edge $(x,a,y)$ has probability
$\mu(a)$. Suppose that the graph has a tree set as above.
Via the fundamental theory of {\sc Chomsky and Schutzenberger}~\cite{ChSc},
the associated grammar translates into an algebraic system of equations for the
generating functions associated with the transition probabilities of 
the random walk on our context-free graph. The ``good'' properties of the tree
set, in particular irreducibility, guarantee that this  system of equations
allows to apply the methods of complex analysis
that in the monograph of {\sc Flajolet and Sedgewick}~\cite[\S VII.6]{FlSe}
are subsumed as the Drmota--Lalley--Woods theorem. As a matter of fact,
that theorem applies directly only to the generating functions of certain 
restricted transitions, while the final step regarding $n$-step return probabilities
$p^{(n)}(x,x)$ needs some additional care. As a result, we get the following
alternative between three possible cases of asymptotic behaviour for all
vertices $x$ of the context-free graph.
\begin{align}
p^{(n)}(x,x) &\sim c_{x,x} \,R_{\mu}^{-n} \,, \quad \text{or}\label{eq:asy1} 
\\[1pt] 
p^{(n)}(x,x) &\sim c_{x,x} \,R_{\mu}^{-n}\, n^{-1/2} \quad
\text{or}\label{eq:asy2} 
\\[1pt]
p^{(n)}(x,x) 
&\sim \bigl(c_{x,x} + (-1)^n \bar c_{x,x}\bigr)\,R_{\mu}^{-n}\, n^{-3/2} \,, 
\label{eq:asy3}\end{align} 
as $n \to \infty$ (with $n$ even when the graph has no odd cycles), 
where $0 \le |\bar c_{x,x}| < c_{x,x}$ and $R_{\mu} \ge 1$. The oscillating
term in (3) can occur when the graph does have odd cycles, while there are none
outside of some fixed finite set. The basic example exhibiting those
oscillations is reflecting random walk on the non-negative integers,
as studied by {\sc Lalley}~\cite{La2}. 

In terms of the underlying groups $G$ and $K$, we can just look at the random 
walk 
induced by $\mu$ on $G$ itself. Then $p^{(n)}(o,o)$ is the probability that
this walk, starting at $1_G$, is in $K$ at the $n$-th step.

In particular, when $K = \{ 1_G\}$ and $G$ is virtually free, but not virtually
cyclic, then we obtain the asymptotic behaviour (3), without oscillating term:
\begin{equation}\label{eq:loc-virtfree}
p^{(n)}(x,x) \sim c_{x,x} \,R_{\mu}^{-n} \, n^{-3/2}\,,
\end{equation}
as $n \to \infty$ (and $n$ is even when the underlying Cayley graph of $G$ has
no odd cycles) where $R_{\mu} > 1$ since the group is non-amenable.
Note the small restriction that we require that the support of the
measure $\mu$, or more precisely, the set $A = \psi(\Si)$ is a symmetric
subset of $G$. Up to this (which can be amended by additional work), we get 
the general local limit theorem for finite range random walks on virtually 
free groups.

Behind this result, there is a long history. See the survey of 
{\sc Woess}~\cite{Wsurv} 
for quite complete references to work regarding asymptotics of random walks 
on free groups. In the present context of finite range random walks, the
most significant contribution is the one of {\sc Lalley}~\cite{La1} regarding
free groups.
The use of context-free languages to derive infomation about random walk
asymptotics on virtually free groups appears first in {\sc Woess}~\cite{Wcf}.
This is closely related to the random walks on regular languages
of \cite{La2}, which provide another possible approach to random walks
on virtually free groups. 
 
In addition to \eqref{eq:loc-virtfree}, we also obtain the local limit theorem
for a larger class of Schreier graphs, resp. context-free graphs, and undertake
a careful study of the behaviour in the periodic case, when the oscillations
in \eqref{eq:asy3} can occur. As a matter of fact, when dealing with Schreier
graphs instead of just groups, such a situation is quite natural.

\smallskip

\noindent{\bf Acknowledgement.} The author is grateful to 
Tullio Ceccherini-Silberstein for
many discussions on this work, and to Laurent Bartholdi 
and Brian Bowditch for providing the references \cite{Bog} and \cite{Sc},
respectivley, in the context of
Theorem \ref{thm:virtfree}.


\section{Cuts and structure trees of context-free graphs}\label{sec:cuts}

For the study of certain aspects of context-free groups, pairs and graphs,
it will be preferable to use more general types of connected subsets than
the cones with respect to some root vertex. In the sequel, we shall among other
be  interested in the property that for every pair of cones $C, D$, there is 
a cone  $C' \subset C$ which is isomorphic with $D$. 
Now while it may be feasible -- for example by using translation by
a suitable group element --
to show that $C$ contains a subgraph isomorphic with $D$, it is in general
not so clear how to obtain such a copy of $D$  that is again a cone with 
respect to deleting a ball $B(o,n)$ for the same root vertex $o$ as before.
This is also so if one replaces -- as in \cite{CeWo3} -- the root vertex with
a finite reference set $F$ for defining the cones as the components left when 
deleting the balls $B(F,n)$ ($n \ge 0$).

Therefore we now introduce the setting for a more general construction of 
context-free graphs.

We start with an infinite symmetric, fully deterministic labelled 
graph $(\VV,\EE,\ell)$.
A \emph{cut} is an infinite, connected induced subgraph $C$ of $\XX$ such that
$\bd C$ is finite and non-empty. (The boundary $\bd C$ consists of all
vertices in $C$ that have a neighbour outside $C$.)
Thus, there are only finitely many edges between $C$ and  its complement
$C^*$.
With a slight deviation from the terminology of \cite{Du2},
\cite{DiDu} and \cite{ThWo} , two cuts $C$ and $D$ are 
said to be \emph{non-crossing} if one of
\begin{equation}\label{eq:cross}
C \subset D\,,\;D \subset C\,,\;\; \text{or} \;\;C \cap D = \emptyset
\end{equation}
holds. (In the original definition, also the option $C^* \subset D$ is
included.)

An \emph{oriented tree set} in $X$ is a non-empty family $\Ecal$
of pairwise non-crossing cuts in $X$. We also require that
\begin{equation}\label{eq:Max}
M = \max \{ \diam(\partial C) : C \in \Ecal \} < \infty\,.
\end{equation}

\begin{lem}\label{lem:complement}
There is at most one pair of cuts $C, C^*$ such that both belong to $\Ecal$,
and then both $C$ and $C^*$ must be maximal in $\Ecal$ with respect to set 
inclusion.
\end{lem}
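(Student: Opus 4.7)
The plan is to prove the two claims---maximality of each member of such a pair, and uniqueness of the pair---by direct case analysis using the non-crossing condition \eqref{eq:cross}. The key observations are just that $C$ and $C^*$ are disjoint and jointly exhaust $\VV$, and that no cut can equal the whole vertex set (since cuts must have non-empty boundary).

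For maximality, I would argue as follows. Suppose $C, C^* \in \Ecal$ and, for contradiction, that there exists $D \in \Ecal$ with $C \subsetneq D$. Apply \eqref{eq:cross} to the pair $D, C^*$. The option $D \subset C^*$ gives $C \subset D \subset C^*$, contradicting $C \cap C^* = \emptyset$ (as $C$ is non-empty). The option $C^* \subset D$ together with $C \subset D$ yields $D \supset C \cup C^* = \VV$, so $D = \VV$, which is incompatible with $\partial D \neq \emptyset$. The option $D \cap C^* = \emptyset$ forces $D \subset C$, contradicting $C \subsetneq D$. Hence no such $D$ exists, i.e.\ $C$ is maximal in $\Ecal$. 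By symmetry (interchanging the roles of $C$ and $C^*$), the same argument shows $C^*$ is maximal.

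For uniqueness, suppose $(C, C^*)$ and $(D, D^*)$ are two pairs of complementary cuts both contained in $\Ecal$, and that $\{C, C^*\} \neq \{D, D^*\}$. Since $C \cap C^* = \emptyset$ while any cut is non-empty, the four sets $C, C^*, D, D^*$ are pairwise distinct. By the first part, all four are maximal. Applying \eqref{eq:cross} to the pair $C, D$: containment in either direction would contradict maximality together with distinctness, so the only remaining option is $C \cap D = \emptyset$, whence $C \subset D^*$. The same reasoning applied to $C, D^*$ gives $C \cap D^* = \emptyset$, i.e.\ $C \subset D$. Together these force $C \subset D \cap D^* = \emptyset$, contradicting the non-emptiness of $C$.

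There is no real obstacle---the whole proof is a bookkeeping exercise once one notes that the \emph{only} role of the hypothesis $C^* \in \Ecal$ (beyond $C \in \Ecal$) is to rule out the bad case $C^* \subset D$ via the boundary condition $\partial D \neq \emptyset$. The rest follows mechanically from the three alternatives in \eqref{eq:cross}.
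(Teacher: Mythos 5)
Your proof is correct and uses essentially the same ingredients as the paper's: a direct case analysis on the three alternatives in \eqref{eq:cross}, combined with the observations that complementary cuts are disjoint, jointly exhaust $\VV$, and that no cut equals all of $\VV$. You organize it slightly differently (establishing maximality first and then deducing uniqueness from it, whereas the paper argues that for any $D\in\Ecal$, both $D^*\notin\Ecal$ and $D\not\supsetneq C$ in parallel), but the core argument is the same.
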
 

\begin{proof} Let $C, C^*$ be as stated, and let $D \in \Ecal$ be another cut.

If $D \subsetneq C$ then we cannot have $C \subset D^*$ 
(since otherwise $X = C \cup C^* \subset D^*$), nor $D^* \subset C$,
nor $D^* \cap C = \emptyset$. Therefore $D^* \notin \Ecal$.

Similarly, one shows that if $D$ is a cut with $C \subsetneq D$ then
$D \notin \Ecal$.  
\end{proof}

We mention two related properties of $\Ecal$ which are not hard to prove;
see e.g. \cite{Du2}.
\begin{equation}\label{eq:finprop}
\begin{aligned} 
&\text{If $C \in \Ecal$ and $x \in C$, then 
$\;\{ C' \in \Ecal: x \in C' \subset C \}\;$ is finite.}\\
&\text{If $C,D \in \Ecal$ and $D \subset C$, then 
$\;\{ C' \in \Ecal: D \subset C' \subset C \}\;$ is finite.}\\
\end{aligned}
\end{equation}

For $C, D \in \Ecal$, we say that $D$ is a \emph{successor} of $C$, notation
$C \to D$, if there is no $C' \in \Ecal$ such that 
$C \supsetneq C' \supsetneq D$. In that case, $C$ is of course
a \emph{predecessor} of $D$, notation $C = D^-$. 

\begin{lem}\label{lem:tree} One of the following holds:
\\[5pt]
{\rm (1)} Every element of $\Ecal$ is contained in a cut in $\Ecal$ that is
maximal with respect to set inclusion, or
\\[5pt]
{\rm (2)} There is a strictly increasing sequence $(C_k)$ in $\Ecal$ whose union
is $X$, and for every $C \in \Ecal$, there is $k$ such $C \subset C_k\,$.
\end{lem}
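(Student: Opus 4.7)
The strategy is a dichotomy: assume (1) fails and construct the chain required for (2). Pick $C_0 \in \Ecal$ that is contained in no maximal element of $\Ecal$. Then $C_0$ is not maximal, so some $C_1 \supsetneq C_0$ lies in $\Ecal$; moreover $C_1$ is contained in no maximal element either (otherwise $C_0$ would be), hence is not maximal, giving $C_2 \supsetneq C_1$ in $\Ecal$, and so on. Iterating produces a strictly increasing sequence $C_0 \subsetneq C_1 \subsetneq C_2 \subsetneq \cdots$ in $\Ecal$.

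The core of the proof, and what I expect to be the main obstacle, is to show that $U := \bigcup_k C_k$ equals $X$. I would argue by contradiction: suppose $w \in X \setminus U$. Since $X$ is connected and $C_0 \subset C_k$ for every $k$, we have $d(w, C_k) \le L := d(w, C_0) < \infty$, and a shortest path from $w$ into $C_k$ ends at a vertex of $\partial C_k \cap B(w, L)$. The graph is locally finite (each vertex is incident to at most $|\Si|$ edges), so $B(w, L)$ is finite, and by pigeonhole some $v \in B(w, L)$ belongs to $\partial C_k$ for infinitely many $k$. For these $k$, condition \eqref{eq:Max} forces $\partial C_k \subset B(v, M)$, a finite set admitting only finitely many subsets; a second pigeonhole then produces an infinite strictly increasing subchain $C_{k_1} \subsetneq C_{k_2} \subsetneq \cdots$ sharing a common boundary set $F$. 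Now for any cut $C$ with $\partial C = F$, every vertex of $C \setminus F$ has all its neighbours in $C$, so its connected component in $X \setminus F$ stays inside $C \setminus F$; hence $C \setminus F$ is a union of components of $X \setminus F$. Because $X$ is connected and locally finite with $F$ finite, $X \setminus F$ has only finitely many components (each is adjacent to $F$), so a strictly increasing infinite chain of cuts with boundary $F$ cannot exist. This contradiction gives $U = X$.

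For the absorption property, take any $D \in \Ecal$ and apply non-crossing to $D$ and $C_0$: one of $D \subset C_0$, $C_0 \subset D$, or $D \cap C_0 = \emptyset$ holds. The first already gives (2). In the second case, every $C_k \supset C_0$ meets $D$ in the nonempty set $C_0$, so non-crossing with $C_k$ leaves only $D \subset C_k$ or $C_k \subset D$; if the latter held for all $k$, then $X = U \subset D$, contradicting $\partial D \neq \emptyset$, so $D \subset C_k$ for some $k$. In the third case, $C_k \subset D$ would yield the forbidden $C_0 \subset D$, so non-crossing with $C_k$ leaves only $D \subset C_k$ or $D \cap C_k = \emptyset$; if disjointness held for all $k$, then $D = D \cap X = \emptyset$, impossible. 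Hence $D \subset C_k$ for some $k$, which completes (2).
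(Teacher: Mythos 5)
Your proof is correct and follows the same overall structure as the paper's: negate (1), build a strictly increasing chain by iterating through non-maximal cuts, show the union is $X$, then use non-crossing against the $C_k$ to get the absorption property. The one place you do noticeably more work is in showing $\bigcup_k C_k = X$: the paper simply asserts that $d(x,\partial C_k)\to\infty$ and then runs a short path-connectedness argument, whereas you give a complete pigeonhole argument (bounded boundaries near a fixed vertex, then a common boundary $F$, then only finitely many cuts with boundary $F$ since $X\setminus F$ has finitely many components). Your extra detail is exactly what would be needed to justify the paper's unproved assertion, so this is the same approach carried out more carefully rather than a genuinely different route.
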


\begin{proof} Suppose that there is some element in $\Ecal$ that is not
contained in a maximal cut in $\Ecal$. Then there must be a strictly increasing 
sequence $(C_k)_{k \ge 0}$ in $\Ecal$. Let $x \in X$ be arbitrary.
Then $d(x,\partial C_k) \to \infty$ as $k \to \infty$.
Let $\pi$ be a path that connects $x$ with some element of $C_0$, and 
set $n = |\pi|$. If $k$ is such that $d(x,\partial C_k) > n$ then
$\pi$ is a connected subset of $X$ that does not intersect $\partial C_k$.
Therefore we must have $\pi \subset \partial C_k^*$ or  
$\pi \subset \partial C_k$. Since $\pi$ intersects $C_0 \subset C_k$, the 
second case must be true. In particular, $x \in C_k$.

We see that $X = \bigcup_k C_k$. Now let $C \in \Ecal$ be arbitrary.
Then there must be $k_0$ such that $C \cap C_k \ne \emptyset$ for all
$k \ge k_0\,$. We cannot have $C_k \subset C$ for all $k \ge k_0\,$, 
since otherwise $C = X$. If $k \ge k_0$ is such that $C_k \not\subset C$, 
then of the three cases of \eqref{eq:cross}, only $C \subset C_k$ remains.
\end{proof}

Following \cite{Du1} and \cite{DiDu}, we construct
the \emph{(oriented) structure tree} $\Tcal=\Tcal_{\Ecal}\,$. 
There is a slight difference, in that edges are only oriented in
one way: the (oriented, non-symmetric) edge set is $\Ecal$. 
The tree is obtained as follows: with each $C \in \Ecal$, we associate
its terminal vertex $\xi = \xi_C\,$. It coincides with the initial
vertex in $\Tcal$  of $D \in \Ecal$ precisley when $C \to D$.

Furthermore, if $\Ecal$ contains maximal elements, then we introduce
one vertex $\xi_0$  as the initial vertex of each maximal cut in
$\Ecal$ (as an edge of $\Tcal$).
 
It is clear that this defines an oriented tree. In case (1) of Lemma 
\ref{lem:tree}, we say that $\Ecal$ is \emph{rooted}. The tree has the root 
vertex $\xi_0\,$, and the edges
are oriented away from that root. In case (2), there is no such root,
but there is an end of the tree such that all edges point away from that end. 
Since we shall restrict our attention to case (1), we omit giving further 
details on the other case. We let 
$$
\partial_{\Ecal} \XX =  \bigcap\{ C^* : C \in \Ecal\}\,.
$$
In the rooted case, this \emph{root set} is the intersection of the complements of the
maximal elements of $\Ecal$. Otherwise, in case (2), $\partial_{\Ecal} \XX$ is 
empty. For $C \in \Ecal$, let
$$
\partial_{\Ecal} C = C \setminus \bigcup \{D \in \Ecal : D^-=C\}\,.
$$
Thus, $X$ is the disjoint union of $\partial_{\Ecal} X$ and all 
$\partial_{\Ecal} C\,$, where $C \in \Ecal$.
With this partition, we can associate the \emph{structure map} 
$x \mapsto \xi(x)$ from $X$ to $\Tcal_{\Ecal}\,$. If $x \in \partial_{\Ecal} X$
then $\xi(x) = \xi_0$. If $x \in \partial_{\Ecal} C$ then $\xi(x) = \xi_C\,$,
that is, $C$ is the unique minimal cut in $\Ecal$ that contains $x$.
(If the vertices $x, y \in X$ are neighbours, then $\xi(x)$ and $\xi(y)$
either coincide or are neighbours in $\Tcal_{\Ecal}\,$.)

We say that a rooted tree set $\Ecal$ \emph{tesselates} $\XX$, if 
$\partial_{\Ecal} \XX$ and  $\partial_{\Ecal} C$ are finite for every 
$C \in \Ecal$. In that case, the structure tree is locally finite, that is, for
every $C \in \Ecal$, there are only finitely many $D \in \Ecal$ with $C \to D$. 

Given $\rho \in \N$, we say that $\Ecal$ is \emph{$\rho$-separated}, 
if $\partial_{\Ecal} \XX \ne \emptyset$
and $d(\partial C, \partial D) \ge \rho$ whenever $C, D \in\Ecal$ are 
distinct. When $\rho=1$, we just say ``separated''.
The collection of all cones with respect to a root vertex $o$
is a rooted tree set that tesselates $\XX$ and is separated. 

\begin{dfn}\label{def:fintype}
Let $X$ and $\Ecal$ (rooted, oriented) be as above.
\\[5pt]
(a) We say that $\Ecal$ has \emph{finite type}, if it tesselates $\XX$ and
there are elements 
$$
C_1\,,  \dots, C_r \in \Ecal
$$
such that every cut $C \in \Ecal$ is isomorphic with one of the 
$C_i$ as a labelled graph, and that isomorphism (which is chosen and fixed 
for each $C$) maps $\partial_{\Ecal} C$ to 
$\partial_{\Ecal} C_i$. In that case, we say that $C$ has \emph{type} $i$. 
\\[5pt]
(b) We say that $\Ecal$ is \emph{irreducible} if it has finite type and 
the cuts $C_1\,, \dots, C_r$ of (a)   
are such that for all $i, j$, the cut $C_i$ contains as a proper subset 
a cut in $\Ecal$ that is isomorphic with $C_j\,$.
\end{dfn}

In a context-free graph $\XX$, the cones with respect to a root vertex $o$, or 
with respect to a connected, finite set $F$ as in \cite{CeWo3}, 
are a special case of (a).

Note that in case of finite type as in (a), the isomorphism from $C$
to $C_i$ must also map $\partial C$ to $\partial C_i\,$, since the
boundaries consist precisely of those points where the respective cut is
not fully deterministic. Furthermore, since $\partial_{\Ecal} C$ is mapped onto 
$\partial_{\Ecal} C_i\,$,
the number of $D \in \Ecal$ with type $j$ for which $C \to D$, resp. 
$C_i \to D$ is the same.

If (a) holds, the structure tree
with root $\xi_0$ is a tree with finitely many cone types in the sense of 
{\sc Nagnibeda and Woess}~\cite{NaWo}; see also \cite[\S 5]{CeWo2}.
If $C \in\Ecal$ is isomorphic with $C_i$ then
we say that the associated edge of $\Tcal$, as well as its terminal
vertex in the tree, have \emph{type} $i$. The root $\xi_0$ has type $0$.

For irreducibility it is necessary that each $C \in \Ecal$ is infinite.

Similarly as in the proof of Theorem 4.2 in \cite{CeWo3}, we can
encode the type stucture in a finite, oriented \emph{graph of types} 
$\Gamma$ over the vertex set $I= \{ 1,\dots, r\}$, where we have $a(i,j)$ 
oriented edges $i \to j$ whenever there are precisely $a(i,j)$ cuts 
$D \in \Ecal$ such that $C_i \to D$ and
$D$ has type $j$. Ths means that we enumerate the successor cuts with type $j$
of $C_i$ by the numbers $1, \dots, a(i,j)$, and this enumeration is carried
over to any cut $C$ with type $i$ and its successors with type $j$ via the
isomorphism $C \to C_i$. 

For our use, it will be good to have a separated tree set.

\begin{pro}\label{pro:goodcuts}
Let $\Ecal$ be a rooted, oriented tree set consisting of
cuts of the fully deterministic, symmetric labelled  graph $\XX$.
\\[5pt]
\emph{(1)} Suppose that $\Ecal$ has finite type.
Then for every $\rho \in \N$,
there is a $\rho$-separated rooted tree set $\ol\Ecal \subset \Ecal$ 
that tesselates $\XX$ and such that $\partial_{\ol\Ecal} \VV \supset F$. 
(It has finite type.)
\\[5pt] 
\emph{(2)}
If in addition $\Ecal$ is irreducible, then $\ol\Ecal$ can be constructed
such that it is also irreducible.
\\[5pt] 
\emph{(3)} If $F \subset X$ is finite, then $\Ecal$ can be modified
such that $\partial_{\Ecal} \XX \supset F$. 
In addition, the cuts $C$ in $\Ecal$ can be modified such that
all $\partial_{\Ecal}C$ as well as $\partial_{\Ecal}\XX$ are connected. These 
modifications
preserve $\rho$-separation, the property that $\partial_{\Ecal}\XX \subset F$,
and irreducibility, respectively. 
\end{pro}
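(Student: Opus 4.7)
The plan is to obtain the subfamily $\ol\Ecal$ in (1) by pruning the structure tree $\Tcal_\Ecal$. First, declare the $\Ecal$-maximal cuts to be $\ol\Ecal$-maximal; inductively, once $C$ has been placed in $\ol\Ecal$, admit as $\ol\Ecal$-successors the cuts $D \in \Ecal$ with $D \subsetneq C$ that are maximal among those satisfying $d(\partial D, \partial C) \ge \rho$. Local finiteness of $\Tcal_\Ecal$ (from tesselation of $\Ecal$) combined with the second line of \eqref{eq:finprop} ensures that the intermediate zone $C \setminus \bigcup\{D : D \text{ an $\ol\Ecal$-successor of $C$}\}$ is finite, so $\ol\Ecal$ tesselates. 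Non-crossing is automatic since $\ol\Ecal \subseteq \Ecal$, and finite type is inherited because the criterion ``$\rho$-far from the parent boundary'' is preserved by every labelled-graph isomorphism $C \to C_i$, so the selection is compatible with the bookkeeping of finite type. For $\rho$-separation between distinct $\ol\Ecal$-cuts $C \ne D$, the nested case is immediate by construction; in the disjoint case, a path of length $<\rho$ between $\partial C$ and $\partial D$ inside their minimal common $\Ecal$-ancestor would combine with the maximality of the selection to violate distinctness, possibly after one additional refinement pass to also separate sibling pairs uniformly.

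For (2), I would iterate the irreducibility hypothesis: each $C_i$ properly contains a cut $D \cong C_j$ in $\Ecal$, and $D$ in turn contains a cut $\cong C_i$, so composing descents produces, at every depth inside $C_i$, cuts of every type. Since the construction in (1) selects $\ol\Ecal$-descendants at bounded depth, any sufficiently deep type-$j$ copy lies in $\ol\Ecal$, and irreducibility of $\ol\Ecal$ follows. For (3), I would handle the inclusion $\partial_\Ecal \XX \supset F$ by simply deleting from $\Ecal$ every cut that meets $F$. The remaining family is a non-crossing subfamily of $\Ecal$; its tesselation absorbs the vertices of the deleted cuts into $\partial_\Ecal \XX$; and $\rho$-separation, finite type, and (by the same depth argument as in (2)) irreducibility are preserved, since we merely pass to a sub-tree-set.

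For the connectedness claim I would operate canonically per iso-class. For each representative $C_i$, fix once and for all a minimal finite connected subgraph $S_i \subseteq C_i$ containing $\partial_\Ecal C_i$ — for instance, the union of $\partial_\Ecal C_i$ with geodesic paths in $C_i$ joining its components. Transport $S_i$ through the chosen isomorphism to every cut $C$ of type $i$, and replace each $\Ecal$-successor $D$ of $C$ by $D \setminus S_i$, so that $S_i$ becomes the new $\partial_\Ecal C$; the same surgery on the outer ``root'' layer handles $\partial_\Ecal \XX$. The hardest part of the proof, and the main obstacle I expect, is verifying that the shrunken sets $D \setminus S_i$ remain cuts (connected, infinite, with finite boundary), that non-crossing survives the surgery, and that the modified family still has finite type — at most $r$ new iso classes arise from the canonical per-type choice. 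Once this is settled, $\rho$-separation, the inclusion $\partial_\Ecal \XX \supset F$, and irreducibility are each preserved because the modifications commute with the isomorphisms that witness finite type.
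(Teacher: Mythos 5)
Your overall strategy (prune the structure tree, then do surgery for connectedness) is the right one, but your execution departs from the paper's at each stage, and in part (2) the departure is a genuine mathematical error, not merely an alternative route.

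For (1), the paper does a uniform depth pruning: it fixes $k\ge\rho s$ (with $s$ defined from \eqref{eq:finprop}) and keeps exactly those cuts whose terminal vertex in $\Tcal$ sits at depth $mk$, $m\ge1$; separation then follows from a metric comparison between $\Tcal$ and $\XX$. Your greedy rule (admit the maximal $D\subsetneq C$ with $d(\partial D,\partial C)\ge\rho$) controls the parent--child distance by fiat but, as you yourself note, gives nothing for disjoint siblings $D,D'\subsetneq C$: both may be maximal subject to your constraint yet have $d(\partial D,\partial D')=1$. The ``one additional refinement pass'' you invoke is not described and is not obviously available; this is a real gap, not a cosmetic one.

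For (2), your argument fails. You claim that iterating the containments $C_i\supsetneq D\cong C_j$ ``produces, at every depth inside $C_i$, cuts of every type.'' This is false whenever the type-adjacency matrix $A=(a(i,j))$ is irreducible but has period $\dsf>1$: then $a^{(n)}(i,j)>0$ forces $n$ to lie in a single residue class mod $\dsf$, so below a type-$i$ cut, type-$j$ cuts occur only at depths in one congruence class. If your pruning depths fall in the wrong class, the resulting $\ol\Ecal$ simply misses type $j$, and irreducibility is destroyed. The paper's proof exists precisely to handle this: it introduces the cyclic partition $I=I_0\cup\dots\cup I_{\dsf-1}$, chooses $\bar\dsf=\bar n\dsf$ large, and then, per branch, re-anchors the selected levels at an offset $n_i$ so that all retained vertices have type in the single class $I_0$, over which $A^{\bar\dsf}$ is primitive. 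Without this alignment the claim does not hold, so this is the central missing idea.

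For (3), the first half (deleting the finitely many cuts that meet $F$, absorbing their $\partial_\Ecal$-pieces into $\partial_\Ecal\XX$) agrees with the paper in substance. For connectedness, you propose replacing $\partial_\Ecal C$ by a canonically chosen connected set $S_i$ built from geodesics, and shrinking each successor $D$ to $D\setminus S_i$. You flag the obstacle yourself: there is no reason $D\setminus S_i$ remains a cut, and geodesics inside $C_i$ may dive arbitrarily deep into the successors and disconnect them. The paper avoids this entirely by shrinking uniformly by the constant $M$ from \eqref{eq:Max} (after first ensuring $\rho$-separation with $\rho\ge M$): it sets $\wt C=\{x\in C:d(x,\partial C)\ge M\}$, takes its infinite components as the new cuts, and proves connectedness of the new $\partial_{\wt\Ecal}$-pieces by a path-replacement/induction argument. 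That uniform-radius shrink is compatible with the type isomorphisms and with nesting automatically, which is exactly what your geodesic construction does not give you for free. So here too the paper's route resolves precisely the difficulty you identified but left open.
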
 

\begin{proof}
First of all, there is a finite bound on the number of elements in 
$\partial C_i\,$, $i=1, \dots,r$. By \eqref{eq:finprop},
the number
$$
s = \max \Bigl\{\bigl| \{ D \in \Ecal : C_i \supsetneq D \ni x \}\bigr| : 
x \in \partial C_i\,,\;i \in I \Bigr\}
$$
is finite. When $C, D \in \Ecal$ are such that as
edges of $\Tcal$, their initial points have distance at least $s$, then
their boundaries do not intersect. If the initial points have distance
at least $\rho s$ in $\Tcal$, then their boundaries have distance at least
$\rho$ in the graph $X$. 

We now choose an integer $k \ge \rho s$ and modify $\Ecal$ as follows.

The new tree set $\overline\Ecal$ consists of all $C$ in $\Ecal$ whose 
terminal vertex
(as an edge of $\Tcal$) is at distance $m k$ from $\xi_0\,$, where 
$m \ge 1$ (integer). 
This is a new tree set. The associated structure tree $\overline\Tcal$ is 
obtained from $\Tcal$ by keeping only the vertices of $\Tcal$ at distance
$m k$ from $\xi_0$, where $m\ge 0$. Each of them is connected by an edge 
to all its descendants which are at distance $(m+1)k$ from $\xi_0\,$.
Here, by a descendant of a vertex $\xi$ of $\Tcal$, we mean another
vertex $\eta$ with the property that $\xi$ lies on the geodesic path from
$\xi_0$ to $\eta$.

Since $\Ecal$ tesselates $\XX$ and $\Tcal$ is locally finite, also
$\overline\Ecal$ tesselates $\XX$. There are only finitely many isomorphism
types in $\overline\Ecal$. 
Thus, $\overline\Ecal$ is as required by statement (1).

\smallskip

\noindent
(2) If the original $\Ecal$ is irreducible,
then it is not immediate that the above $\overline\Ecal$ inherits that
property, and some additional effort is needed.

Consider the adjacency matrix $A = \bigl(a(i,j)\bigr)_{i,j\in I}$ of the 
graph of types $\Gamma$ associated  with $\Ecal$. 
Let $A^n = \bigl(a^{(n)}(i,j)\bigr)_{i,j \in I}$ be the $n$-th matrix power.
The assumption of statement (2) says that $A$ is an irreducible matrix:
for all $i, j$ there is $n=n_{i,j}$ such that $a^{(n)}(i,j) > 0$. By the 
elementary theory of irreducible non-negative matrices (see e.g. 
{\sc Seneta}~\cite{Se}), there are positive integers $\dsf$ (the 
\emph{period} of $A$) and $n_0$ with the following properties.  

\smallskip

$\bullet\,$ The index set has a partition $I = I_0 \cup \dots \cup I_{\dsf-1}$
such that if $i \in I_k$ and $a(i,j) > 0$, then $j \in I_{k+1}\,$, where
we set $I_{\dsf}=I_0\,$.

$\bullet\,$ $a^{(n\dsf)}(i,j) > 0$ for all $n \ge n_0$
and all $i,j$ that belong to the same block.

\smallskip

We choose $\bar n \ge n_0$ such that $\bar \dsf = \bar n \dsf > \rho s$.
For each $i$, we can find $n_i > \rho s$  such that $a^{(n_i)}(i,j)> 0$
for some $j \in I_0\,$. Note that then we must have $j' \in I_0$
whenever $a^{(n_i)}(i,j')> 0$.  

We now describe how to modify $\Tcal$ to obtain the reduced tree 
$\overline\Tcal$. Let $\xi$ be any neighbour of $\xi_0\,$, and suppose that $\xi$
has type $i$. Consider the branch of $\Tcal$ that starts with $\xi$, that is,
the subtree spanned by all descendants of $\xi$. Of that branch, we keep all 
those vertices (and their types) that are at distance $n_i + m\bar \dsf$ from 
$\xi$, where $m \ge 0$. 

We now connect $\xi_0$ by a new oriented edge with those elements of the branch 
that are at distance $n_i$ from $\xi$, and we connect each of the 
vertices that we have kept in our branch with each of its descendants 
at distance $d$ in the original~$\Tcal$. 
We do this for every neighbour $\xi$ of $\xi_0\,$. We obtain a new
tree $\overline \Tcal$  whose vertices as well as their types are also
part of the original $\Tcal$. 
By construction, all those vertices have their type in $I_0$.

The corresponding tree set $\overline\Ecal$ consists of all $C \in \Ecal$
whose terminal vertex in $\Tcal$ belongs to $\overline\Tcal$.
By construction, $\overline\Ecal$ tesselates $X$, is separated,
and has finite type. Let $C \in \overline\Ecal$ have type $i \in I_0\,$.
If $j \in I_0$ then $a^{(\bar\dsf)}(i,j) > 0$. This means that in 
the orginial $\Tcal$, the cut $C$ (as an edge) 
has a descendant $D$ of type $j$ at distance $\bar\dsf$ (that is, the endvertices
of those edges of $\Tcal$ have distance $\bar\dsf$). But then $D \in \ol\Ecal$. 

We now see that $\ol\Ecal$ is irreducible, and its types are given by the
set $I_0\,$.
\smallskip

\noindent
(3) We first consider the finite set $F$ and the structure tree 
$\Tcal = \Tcal_{\Ecal}\,$.  Recall the structure map 
$x \mapsto \xi(x)$ from $X$ to $\Tcal_{\Ecal}$ that we have described after
introducing the structure tree. 
We delete from $\Ecal$ every cut $C$ which, as 
an oriented edge of $\Tcal$, lies on the geodesic path from
$\xi_0$ to $\xi(x)$ for some $x \in F$. Only finitely many $C$ are
deleted. We obtain a new, smaller tree set $\overline\Ecal$. The associated structure
tree $\overline\Tcal$ is obtained from $\Tcal$ by contracting those 
geodesic paths to $\xi_0\,$, which we also consider as our root vertex
of $\overline\Tcal$. Then we get with respect to the new tree set
$$
\partial_{\ol\Ecal} C = \partial_{\Ecal} C\,,\quad\text{if}\,\; C \in  
\overline\Ecal\,, \AND
\partial_{\ol\Ecal} X 
= \partial_{\Ecal} X \cup \bigcup \bigl\{ \partial_{\Ecal} C : 
C \in \Ecal \setminus \ol\Ecal \bigr\}\,.
$$
Thus, $\ol\Ecal$ inherits from $\Ecal$ all properties stated in the
proposition,  and $F \subset \partial_{\ol\Ecal}X$. 

\smallskip

So now let us assume that already $\Ecal$ itself is such that
$F \subset \partial_{\Ecal}\XX$, and 
that $\Ecal$ is $\rho$-separated, where $\rho \ge M$, with
$M$ as in \eqref{eq:Max}.

For each $C \in \Ecal$, we let 
$\wt C = \{ x \in C : d(x, \partial C) \ge M \}$. Then
$\partial \wt C = \{ x \in C : d(x, \partial C) = M \}$. The subgraph
$\wt C$ of $X$ is not necessarily connected, but -- having
finite boundary -- it will fall apart into infinite connected
components $\wt C^{(1)}, \dots, \wt C^{(k)}$, where $k=k(C)$ depends on $C$,
plus maybe a finite number of finite components.
Since $\phi$ is an isomorphism from $C$ to $C_i$ ($i \in \{1, \dots, r\}$),
it maps each $\wt C^{(l)}$ to  $\wt C_i^{(l)}$ ($l=1, ..., k$) when those
components are numbered accordingly.

Let $C, D \in \Ecal$ with $C \to D$, and consider $\wt D^{(l')}$, where
$l' \in \{1, \dots, k(D)\}$. This is a connected subgraph of $\XX$ which
does not intersect $\partial \wt C$, while it does intersect $\wt C$.
Therefore it must be contained in one of the components of $\wt C$, that
is, in some $\wt C_i^{(l)}$ ($l \in \{ 1, ..., k(C)\}$).
We infer that
$$
\wt \Ecal = \{ \wt C^{(l)} : C \in \Ecal\,,\; l = 1,\dots, k(C) \}
$$
is a $\rho$-separated, oriented tree set with finite type that tesselates $\XX$.

Next, suppose that $\Ecal$ is irreducible. Consider $\wt C_i^{(l)}$,
where $l \le k(C_i)$, and let $j \in \{ 1, \dots, r\}$. Since  
$\wt C_i^{(l)}$ is an infinite subgraph of $X$, it contains some element
$x$  that is sufficiently far from $\partial\wt C_i^{(l)}$ so that it is contained
in some $D \in \Ecal$ with $d(\partial D, \partial \wt C_i^{(l)}) > 0$. 
But then we must have $D \subset \wt C_i^{(l)}$. Now $D$ contains some 
$C \in \Ecal$ that is isomorphic with $C_j$. In turn, $C$ contains
$\wt C^{(l')}$ for all $l' \in \{1, \dots, k(C)\}$, which
are (respectively) isomorphic with $\wt C_j^{(l')}$ for 
$l' \in \{1, \dots, k(C)\}$. Thus, $\wt \Ecal$ is again irreducible.

Finally, we show connectedness of $\partial_{\wt\Ecal} \wt C^{(l)} \in \wt \Ecal$, 
where $C \in \Ecal$.
We fix some point $x_0$ in $\partial \wt C^{(l)}$ and show that every point 
$x \in \partial_{\wt\Ecal} \wt C^{(l)}$ is connected to $x_0$ by a path that lies
entirely within $\partial_{\wt\Ecal} \wt C^{(l)}$. 

Note first that -- as we have seen above -- when 
$\wt C^{(l)} \to \wt D^{(l')} \in \wt\Ecal$, where $D \in \Ecal$, 
then   $\wt C^{(l)} \supset \wt D$. 

(i) If $x \in D$ for such a cut $D \in \Ecal$, then we have the following
possibilities. 
Either $x \in D_M= \{ y \in D: d(x,\partial D) < M \}$, in which case 
$x$ is connected to some $y \in \partial D$ by a path that lies 
entirely within $D_M \subset \partial_{\wt\Ecal} \wt C^{(l)}$. 
Or else $x$ belongs to a finite connected component $F$ 
of $\wt D$ (otherwise $x \notin \partial_{\wt\Ecal} \wt C^{(l)}$). 
Then $x$  is connected by a path within $F \subset \partial_{\wt\Ecal} \wt C^{(l)}$ 
with some element of $\partial F \subset \partial \wt D$. The latter is 
connected to some element $y$ of $\partial D$ by a path of length $M$ that
lies within $D_M \cup \partial F$. 

(ii) If $x \in \partial_{\Ecal} C$ (with $d(x,\partial C) \ge M$)
then $x$ is connected to $x_0$ by some path that lies in 
$\wt C^{(l)}$. If that path does not exit from $\partial_{\Ecal} C$ then
it lies entirely within $\partial_{\wt\Ecal} \wt C^{(l)}$, and we are done.
Otherwise, it exits $\partial_{\Ecal}C$ an enters some $D$ with $C \to D \in \Ecal$
and $\wt C^{(l)} \supset D$, as above. That is, $x$ is connected to some
$y \in \partial D$ by a path within  $\partial_{\Ecal}C$. 

(iii) We are left with having to show that when $D \in \Ecal$ is as above,
and $y \in \partial D$, then $y$ is connected to $x_0$ by a path within
$\partial_{\wt\Ecal} \wt C^{(l)}$. Now there is some path $\pi$ from $y$ to $x_0$
that lies within $\wt C^{(l)}$. This path may go back and forth 
between $\partial_{\Ecal} C \cap \wt C^{(l)}$ and some $D$ as above several
times, and we use induction on the number of those crossings. 
If $\pi$ goes from $y$ directly into $\partial_{\Ecal} C \cap \wt C^{(l)}$
without hitting any further point of $D$, then it lies within 
$\partial_{\Ecal} \wt C^{(l)}$ as required. Otherwise, it makes one or more 
detours into  some $D'\in \Ecal$ with $C \to D'$ and $D' \subset \wt C^{(l)}$. 
(Initially, this may be our $D$ for which $y \in \partial D$.) 
At each of those detours, $\pi$ enters and exits $D'$ at points 
$y', y'' \in \partial D'$, respectively.
But $\diam(\partial D') \le M$, so that $y'$ and $y''$ are connected
by a path that lies within $D'_M  \subset  \partial_{\wt\Ecal} \wt C^{(l)}$.
Thus, we may replace each of those detours with such a path within
$\partial_{\wt\Ecal} \wt C^{(l)}$, and in the end we get another path from 
$y$ to $x_0$ that lies within $\partial_{\Ecal} \wt C^{(l)}$, as required.


We should also show separately that with respect to 
$\wt \Ecal$, the finite set $\partial_{\wt \Ecal} \XX$ is connected. 
This follows the same reasoning as above, in particular (iii), and is omitted.
\end{proof}

\begin{pro}\label{pro:inherit1} 
Let $G$ be finitely generated, $H$ 
a subgroup with $[G:H] < \infty$, and $K$ a subgroup of $H$.
Furthermore, let $\psi: \Si \to H$ and $\psi': \Si' \to G$ be 
symmetric semigroup presentations of $H$ and $G$, respectively.

Suppose that the Schreier graph $\XX=\XX(H,K,\psi)$ has a rooted, oriented 
tree set $\Ecal$ that is irreducible. Then 
also $\XX' = \XX(G,K,\psi')$ has a tree set $\Ecal'$ with the same properties.
\end{pro}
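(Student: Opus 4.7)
The plan is to transport the tree set $\Ecal$ from $\XX$ to $\XX'$ by a thickening construction. First, by the presentation-independence of the property (established earlier in the paper), I reduce to a convenient presentation: take $\Si' = \Si \sqcup T$ with $\psi'|_{\Si} = \psi$ and $\psi'(T)$ a symmetric set of coset-transversal elements for $H$ in $G$ together with their inverses. With this choice, $\XX$ embeds in $\XX'$ as the $H$-orbit of the root under the right $H$-action on $K\backslash G$, every vertex of $\XX'$ is at $\XX'$-distance at most one from $\XX$, the $\Si$-subgraph of $\XX'$ decomposes as a disjoint union of at most $[G:H]$ many $H$-orbits, and the edges between distinct orbits are precisely the new $T$-labelled edges.

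Second, by Proposition \ref{pro:goodcuts} I may assume that $\Ecal$ is $\rho$-separated with $\rho$ much larger than the bound $M$ on boundary diameters and than the maximum detour length via $T$-edges. For each $C \in \Ecal$ I define $\hat C$ to be $C$ together with every vertex of $\XX'\setminus \XX$ that is reachable from $C \setminus \partial C$ by a path in $\XX'$ staying outside the $1$-neighbourhood of $\partial C$. Sufficient $\rho$-separation ensures that $\hat C$ is infinite, connected in $\XX'$, with finite boundary comprising $\partial C$ together with a bounded set of $T$-neighbours in $\XX'\setminus \XX$; the family $\Ecal' = \{\hat C : C \in \Ecal\}$ is then pairwise non-crossing and its structure tree is canonically isomorphic to $\Tcal_\Ecal$, so it is a rooted oriented tree set that tesselates $\XX'$. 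Finite type of $\Ecal'$ follows because the isomorphism class of $\hat C$ is determined by a finite refinement: the class of $C$ in $\XX$ together with an attachment pattern recording which $H$-orbit of $\XX'$ is met by each $T$-edge leaving $\partial C$, both factors ranging over finite sets.

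The main obstacle is promoting this finite-type tree set to a genuinely irreducible one. Two $\Ecal$-type-equivalent cuts $C, D$ may thicken to $\hat C, \hat D$ of different refined types, because the $T$-edges can attach to different $H$-orbits of $\XX'$ with internal structure that persists arbitrarily deep. To resolve this I would adapt the strategy used in the proof of Proposition \ref{pro:goodcuts}(2): form the adjacency matrix $A'$ of the refined type graph $\Gamma'$ of $\Ecal'$, use the original irreducibility of $\Ecal$ (hence of its type matrix $A$) to locate a strongly connected block of $\Gamma'$ that is reached from the root, and iterate the $\Ecal$-successor relation along a large power of $A'$ restricted to this block, thereby extracting an irreducible sub-tree-set of $\Ecal'$ supported on that block. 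The technical heart is verifying non-emptiness and accessibility of this block, which rests on tracking how the attachment patterns transform under $\Ecal$-successors; this is possible precisely because the attachment pattern is a purely local datum at $\partial C$, and the set of $H$-orbits of $\XX'$ is finite.
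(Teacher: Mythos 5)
Your construction diverges from the paper's in a way that creates work the paper avoids, and your opening reduction is circular.

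On circularity: you begin by invoking ``presentation-independence of the property (established earlier in the paper)'' to pass to a convenient alphabet $\Si' = \Si \sqcup T$. But presentation-independence \emph{is} the $H = G$ case of Proposition~\ref{pro:inherit1}; nothing earlier in the paper proves it. The paper's proof handles this as its Step~1, where it must be established directly: with $\XX$ and $\XX'$ having the same vertex set but different labelled edges, one sets $\wt C = \{x \in C : d(x,\partial C) \ge M_2/2\}$ and shows that the \emph{restriction} of the type isomorphism $\phi : C \to C_i$ to $\wt C$ is already an isomorphism of $\XX'$-labelled subgraphs, using that every $\psi'$-label $b$ can be realised by a bounded-length $\psi$-word $v_b$ whose path stays inside $C$. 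You cannot skip this step; without it the reduction to your convenient $\psi'$ is unjustified.

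On the thickening: you take $\hat C$ to be $C$ plus all vertices of $\XX' \setminus \XX$ reachable by arbitrary $\XX'$-paths avoiding a neighbourhood of $\partial C$, and then you correctly observe that the isomorphism class of $\hat C$ is not obviously controlled by the isomorphism class of $C$ — the ``attachment pattern'' problem. This obstacle is an artefact of your construction, not an intrinsic one. The paper shrinks $C$ to $\breve C = \{x \in C : d(x,\partial C) \ge 3M_2/2\}$ and then thickens by \emph{exactly one step} of coset-representative edges: $\wt C = \biguplus_k \{Khg_k : Kh \in \breve C\}$. Because every vertex of $\XX'$ is a unique $b_k$-neighbour of a vertex of $\XX$, the map $\wt\phi(Khg_k) = K\bar h g_k$ (where $\phi(Kh) = K\bar h$) is a well-defined bijection, and another bounded-word argument shows it is a labelled-graph isomorphism $\wt C \to \wt C_i$. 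So type-equivalent cuts in $\Ecal$ produce type-equivalent cuts in $\Ecal'$ automatically; there is no refined type to contend with, and irreducibility transfers essentially verbatim as in Proposition~\ref{pro:goodcuts}(3). Your proposed fallback — locating a strongly connected block of a refined type graph $\Gamma'$ and extracting an irreducible sub-tree-set from it — is sketched too loosely to be checked: it is not clear that cuts of types in a chosen final strongly connected component form a tree set tesselating $\XX'$, nor that each such cut contains a proper sub-cut of every type in the block. This is exactly the kind of difficulty the canonical one-step thickening is engineered to forestall.
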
 

\begin{proof} 
Let $d(\cdot,\cdot)$ be the graph metric of $\XX = \XX(H,K,\psi)$ and 
$d'(\cdot,\cdot)$ the graph metric of $\XX'=\XX(G,K,\psi')$.
The common ``root vertex'' of each of our Schreier graphs 
is $o=K$.

We start by observing that there are integers $M_1\,, M_2 \ge 1, N \ge 0$ that
depend only on $(H,\psi)$ and $(G,\psi')$, 
such that the following properties hold.
\begin{equation}\label{eq:metrics}
\begin{gathered}
\text{For every}\; x \in X'\;\text{there is}\;y \in X\;\text{ with} \;  
d'(x,y) \le N\,, \AND\\
\text{for all }\;y_1\,, y_2 \in X\,,\;\text{one has }\;  
d'(y_1,y_2)/M_1 \le d(y_1,y_2) \le M_2\,d'(y_1,y_2)\,.
\end{gathered} 
\end{equation}
For the sake of clarity, we subdivide the proof in two steps.

\smallskip

\noindent
\emph{Step 1.} \;$H=G$. 
Then $\XX$ and $\XX'$ have the same vertex set,
but different labelled edges which come from the two different symmetric
semigroup presentations $\psi, \psi'$ with associated
alphabets $\Si$ and $\Si'$, respectively. In this case, $N=0$.

We can assume that $\Ecal$ is $R$-separated, where $R=2M_2$. 

Let $C_i\,$, $i \in I$, be the finitely many representatitves
of the isomorphism types of $\Ecal$. 

For any $C \in \Ecal$, we now let 
$\wt C = \{ x \in C : d(x,\partial C) \ge M_2/2 \}$,
where $\partial C$ is the boundary with respect to the graph
structure of $X$. We now consider $\wt C$ as an induced subgraph of $X'$,
that is, with those labelled edges that it inherits from $X'$.
 
Assume that $C$ has type $i$, and let $\phi: C \to C_i$ be the
corresponding isomorphism between the two as labelled subgraphs of $\XX$.
We claim that the restriction $\wt \phi$ of $\phi$ to $\wt C$ is an 
isomorphism between $\wt C$ and $\wt C_i$ as labelled subgraphs of $\XX'$.
It is clear that it is a bijection. Let $x,x' \in \wt C$ be connected by the
edge $(x,b,x')$ of $\XX'$, where $b \in \Si'$. Write 
$\bar x=\phi(x)$ and $\bar x'=\phi(x')$. There is a word 
$v_b \in \Si^+$ with $|v_b| \le M_2$ such
that $\psi(v_b) = \psi'(b)$ in $G$. The unique path\footnote{It is here that we use crucially the assumption
that $\XX$ is fully deterministic: in this case, for every vertex $x$
and every word $v \in \Si^*$, there is a unique path $\pi_x(v)$ that starts at 
$x$ and has label $v$.} 
 $\pi_x(v_b)$ in $\XX$
that starts at $x$ and has label $v_b$ has length at most $M_2$, so that it
lies entirely within $C$. 
Therefore $\phi$ maps this path to the path $\pi_{\bar x}(v_b)$ which ends at
$\bar x'$. But then we have the edge $(\bar x,b,\bar x')$ in the 
edge set of $X'$.
This may become clearer in terms of cosets. We can write $\bar x=Kg$,
 $\bar x'=Kg'$ with $g,g' \in G$. If $\pi_{\bar x}(v_b)$ ends in $\bar x'$ then 
$Kg\psi'(b)=Kg\psi(v_b) = Kg'$.

Thus, $\wt\phi: \wt C \to \wt C_i$ is indeed an isomorphism. 

Let $(x,b,x')$ be again an edge of $X'$. Suppose that $x \in \wt C$ and
$d(x,\partial C) \ge 3M_2/2$. Then $d(x',x) \le M_2$, whence 
$d(x',\partial C) \ge M_2/2$. Therefore $x' \in C$. We conclude that
the boundary $\partial'\wt C$ of $\wt C$ in $X'$ is contained in the finite
set $\{x \in C : M_2/2 \le d(x,\partial C) < 3M_2/2\}$.

We continue similarly as in the proof of Proposition \ref{pro:goodcuts}(3).
The subgraph $\wt C$ of $X'$ is not necessarily connected, but -- having
finite boundary -- it will fall apart into infinite connected
components $\wt C^{(1)}, \dots, \wt C^{(k)}$, where $k=k(C)$ depends on $C$,
plus maybe a finite number of finite components.
Since $\wt\phi$ is an isomorphism from $\wt C$ to $\wt C_i\,$, 
it maps each $\wt C^{(l)}$ to  $\wt C_i^{(l)}$ ($l=1, ..., k$) when those
components are numbered accordingly.

Suppose that $C, D \in \Ecal$ with $C \to D$. Let $\wt D^{(l')}$ be one
of the components of $\wt D$ according to the above construction.
This is a connected subgraph of $\wt C$. Therefore it is contained
in one of the components of $\wt C$. 

At this point, we see that 
$$
\Ecal' = \{ \wt C^{(l)} : C \in \Ecal\,,\; 1 \le l \le k(C) \}
$$    
is a rooted, oriented tree set of $X'$ that has finite type.
 
Now suppose that $\Ecal$ is irreducible. 
We show that also $\Ecal'$ is irreducible. Consider $\wt C_i^{(l)}$. Since it
is infinite, we can choose $x \in \wt C_i^{(l)}$ sufficiently far from 
$\partial'\wt C_i^{(l)}$ such that it is contained in some $C \in \Ecal$ with 
$d(C, \partial'\wt C_i^{(l)}) \ge d(\partial C, \partial'\wt C_i^{(l)}) 
> M_1M_2$.
Then,  by \eqref{eq:metrics}, $d'(C, \partial'\wt C_i^{(l)}) > M_1$. Thus, the
set $C^{M_1} = \{ x \in \XX' : d'(x,C) \le M_1 \}$ induces a connected
subgraph of $X'$ that does not intersect $\partial'\wt C_i^{(l)}$, while it 
does intersect $\wt C_i^{(l)}$. Therefore it must be contained in $D$. In 
particular, $\wt C_i^{(l)} \supset C$ as sets. Given $C_j$ with $j \in I$, 
by assumption there is
$D \in \Ecal$ such that $C \supsetneq D$, and $D$ is isomorphic
with $C_j$ as a labelled subgraph of $\XX$. By construction, $D$
contains each set $\wt D_j^{l'}$ ($l'=1, \dots, k(D)$). Above, we have shown
that via the isomorphism $D \to C_j$ in $X$, the new cut $\wt D_j^{(l')}$ 
is isomorphic with $\wt C_j^{(l')}$ as a labelled subgraph of $\XX'$.
Therefore $\Ecal'$ is irreducible.
     
\smallskip
\noindent\emph{Step 2.} We now assume that $\XX=\XX(H,K,\psi)$ 
is as stated in the proposition. By Step 1, we only need to show that 
$\XX'=\XX(G,K,\psi')$ has a tree set with the required properties 
for \emph{some} symmetric semigroup presentation $\psi'$ of $G$.
Our choice of $\psi'$ is as follows. We let $g_0=1_G,g_1,\dots, g_m$
be representatives of the right cosets $Hg$ ($g \in G$) of $H$ in $G$.

Then we define $\Si' = \Si \uplus \{ b_1, b_1^{-1}, \dots, b_m, b_m^{-1}\}$ 
and set $\psi'(a) = \psi(a)$, if $a \in \Si$ and 
$\psi'(b_i^{\pm 1}) = g_i^{\pm 1}$ ($i=1, \dots, m$).  
This is a symmetric semigroup presentation of $G$. For $X$ and $X'$, we have
$N=0$ and $M_1=1$ in \eqref{eq:metrics}. We write $E$ and $E'$ for the
edge sets of $X$ and $X'$, respectively.

We work with a tree set $\Ecal$ that satisfies the same
assumptions as in Step 1, and is $\rho$-separated with $\rho=6M_2\,$. 

For $C \in \Ecal$, we first define 
$\breve C = \{ x \in C : d(x,\partial C) \ge 3M_2/2 \}$, and let
$$
\wt C = \biguplus_{k=0}^m \{ Khg_k : Kh \in \breve C \} 
= \breve C \cup \biguplus_{k=1}^m 
\{ x \in X' : (y,b_k,x) \in E' \;\text{for some}\; y \in \breve C \}.
$$  
Now suppose again that $\phi: C \to C_i$ is an isomorphism of those
cuts as labelled subgraphs of $X$. We construct $\wt\phi: \wt C \to \wt C_i$                   
in the only possible way. Namely, $\wt\phi(x) = \phi(x)$, if $x \in \breve C$,
while if $x$ is such that $(y,b_k,x) \in E'$, where $y \in \breve C$,
then $\wt\phi(x) = \bar x$ is the unique element of $X'$ such that
$\bigl( \phi(y),b_k,\bar x) \in E'$. 
In terms of $K$-cosets, this means 
$$
\wt\phi(Khg_k) = K\bar hg_k\,, \quad \text{if} \quad y= Kh \in \breve C
\AND \phi(y) = K\bar h\,.
$$    
It is a straightforward exercise that $\wt \phi$ is well-defined and bijective. 
Now let $x, x' \in \wt C$ with $(x,a', x') \in E'$, where 
$a' \in \Si'$. We need to show that also $(\bar x, a', \bar x') \in E'$,
where $\bar x= \wt\phi(x)$ and $\bar x' =\wt\phi(x')$.

There are $y, y' \in \breve C$ and 
$k, l \in \{0, \dots, m \}$ such that $(y,b_k,x), (y',b_l,x') \in E'$
when $k \ge 1$, resp. $l \ge 1$, and $y=x$ when $k=0$, resp. $y'=x'$ when
$l=0$. In particular, 
$g_k\,\psi'(\bar a)\,g_l^{-1} \in H$, and there is a word $v \in  \Si^*$
such that $|v| \le 3M_2$ and $\psi(v) = g_k\,\psi'(\bar a)\,g_l^{-1}$. 
Let $\bar y = \phi(y) = \wt\phi(y)$
and $\bar y' = \phi(y') = \wt\phi(y')$. The path $\pi_y(v)$ from $y$ to
$y'$ lies entirely within $C$. Therefore it is mapped by $\phi$ to the
path $\pi_{\bar y}(v)$ from $\bar y$ to $\bar y'$, which lies entirely
within $C_i$. By the construction of $\wt\phi$, we now get that 
in $\wt C_i$ we have the edge $(x,\bar a, x')$.
Again, this can also be seen in terms of cosets: let $\bar y = K\bar h$.
Then we have shown that $\bar y' = K\bar h'$, where $\bar h' = \bar h\,\psi(v)$,
so that
$$ 
\bar x = K\bar h\,g_k \AND \bar x' = K\bar h'\,g_l = K \bar h\, \psi(v)\, g_l =   
K\bar h\,g_k\,\psi'(\bar a)\,.
$$
The rest of the proof evolves almost precisely as in Step 1 and is omitted.
\end{proof}

We obtain the following important class of graphs that have ``good''
tree sets.

\begin{thm}\label{thm:virtfree} Suppose that $X = X(G,K,\psi)$,  where
$G$ is virtually free, $\K$ is a finitely generated \emph{free}
subgroup of $G$, and  $\psi$ is is any symmetric semigroup presentation of $G$.
Then $X$ has a rooted, oriented tree set that is irreducible.
\end{thm}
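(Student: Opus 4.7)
The strategy is to reduce to the case where $G$ itself is free by invoking Proposition \ref{pro:inherit1}, and then to exhibit an irreducible tree set on the Schreier graph of a free pair directly in terms of cones (half-trees).

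\emph{Step 1 (reduction).} I first argue that $K$ is contained in a free subgroup $H$ of $G$ of finite index. Since $G$ is virtually free, it admits a normal free subgroup of finite index, but to arrange that the given $K$ actually sits inside such a subgroup requires the finer statement that every finitely generated free subgroup of a virtually free group lies in a finite-index free subgroup. This is the non-trivial group-theoretic input provided by the results of \cite{Bog} and \cite{Sc} acknowledged above. Choosing any symmetric semigroup presentation $\psi_H:\Si_H\to H$, Proposition \ref{pro:inherit1} then lets us transport an irreducible, rooted, oriented tree set from $\XX(H,K,\psi_H)$ to $\XX(G,K,\psi)$, so we are reduced to the case where $G$ is free.

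\emph{Step 2 (the free case).} Assume $G$ is free of rank $r$ with free basis $a_1,\dots,a_r$; take $\Si=\{a_1^{\pm 1},\dots,a_r^{\pm 1}\}$ with $\psi$ the inclusion (the choice of symmetric presentation is immaterial by Step~1 of the proof of Proposition \ref{pro:inherit1}). Let $T$ be the Cayley tree of $G$, so $\XX=K\backslash T$. Since $K$ is finitely generated and acts freely on $T$, the graph $\XX$ has a finite connected ``Stallings core'' $\XX_0$ outside of which $\XX$ consists of finitely many disjoint half-trees attached to boundary vertices of $\XX_0$. Fix a root vertex $o\in \XX_0$ and $n_0$ large enough that $B(o,n_0)\supseteq \XX_0$, and let $\Ecal$ be the collection of all connected components of $\XX\setminus B(o,n)$ for $n\ge n_0$. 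Each $C\in\Ecal$ is a half-tree of $\XX$ with single boundary vertex $y$, and is isomorphic as a labelled graph to a half-tree of $T$; its isomorphism class is determined by the unique label $a\in\Si$ that is \emph{missing} among the outgoing edges at $y$. Hence $\Ecal$ is a rooted, oriented tree set of finite type (with at most $2r$ isomorphism classes) that tesselates $\XX$, with root set $\partial_{\Ecal}\XX=B(o,n_0)$.

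\emph{Step 3 (irreducibility).} Let $C\in\Ecal$ be of type $a$, so its root $y$ lacks the outgoing edge labelled $a^{-1}$. For each $b\in\Si\setminus\{a^{-1}\}$ follow the edge $(y,b,z)$; then the half-tree rooted at $z$ is a proper sub-cut of $C$ lying in $\Ecal$ and having type $b$. This produces sub-cuts of every type except $a^{-1}$. For the remaining type $a^{-1}$, pick any $b\in\Si\setminus\{a^{-1}\}$ and go $(y,b,z)$; at the interior vertex $z$ all $2r$ outgoing labels are present within $C$, so we may follow $(z,a^{-1},y')$, and the half-tree rooted at $y'$ is a proper sub-cut of $C$ of type $a^{-1}$. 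Hence every cut in $\Ecal$ properly contains a sub-cut of each of the $2r$ types, so $\Ecal$ is irreducible.

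\emph{Main obstacle.} The real work is in Step 1. The assertion that a finitely generated free subgroup of a virtually free group lies in a finite-index free subgroup is not formal: the free finite-index subgroup one obtains directly from ``virtually free'' need not contain $K$, and enlarging it naively (e.g.\ via $KF$ for $F\trianglelefteq G$ normal and free of finite index) can break torsion-freeness. This is precisely where the combinatorial/separability input of \cite{Bog} and \cite{Sc} is needed. Once Step 1 is in hand, Proposition \ref{pro:inherit1} together with the explicit half-tree description above closes the argument.
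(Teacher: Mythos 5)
Your proposal follows essentially the same route as the paper: reduce to the free case via Bogopol'ski{\u\i}/Scott and Proposition \ref{pro:inherit1}, then take the half-trees outside a finite core, with irreducibility inherited from the Cayley tree of the free group. The paper delegates the free case to \cite[Cor.~5.6]{CeWo3}; you make the Stallings-core / half-tree structure explicit, which is a useful expansion rather than a different argument. One small slip in Step 3: to reach the one remaining type $a^{-1}$ you must also exclude $b=a$ (not only $b=a^{-1}$), since for $b=a$ the edge $(z,a^{-1},\cdot)$ leads back to $y$ and the resulting ``half-tree at $y'$'' is $C$ itself, not a proper sub-cut. The fix is to take $b\in\Si\setminus\{a,a^{-1}\}$, which requires the free subgroup $\F$ to have rank at least $2$; the rank-one (virtually cyclic) case genuinely requires separate treatment, a restriction that is also implicit in the paper's own appeal to irreducibility of the tree of the free group.
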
  

\begin{proof} The group $G$ has a free subgroup $\F$ with $[G:\F] < \infty$
that contains $\K$. 
This follows from the discussion in 
{\sc Scott}~\cite{Sc}, or also from
{\sc Bogopol'ski{\u\i}}~\cite[Thm. 9.1 \& proof]{Bog}.

Now, we have observed in  
\cite[Cor. 5.6]{CeWo3} that with respect to $\psi$ given by
the standard free generators of the free group $\F$ and their inverses, 
the Schreier graph $\XX(\F,\K)$ is such that outside a finite set $F$, its
cones are isomorphic with the cones of the standard Cayley graph of
the free group. In this way, we obtain a tree set which consists just
of those cones and inherits irreducibility from the tree associated
with the free group. 

We can now apply Proposition \ref{pro:inherit1} to conlcude the proof.
\end{proof} 


\section{Grammars associated with tree sets, and their dependency di-graphs}
\label{sec:grammars} 

The {\it dependency di-graph\/} $\Dcal = \Dcal(\Ccal)$ of a context-free grammar
$\Ccal = (\V, \Si, \Pb, S)$ is an oriented graph with vertex set $\V$, with an 
edge from $T$ to   $U$ (notation $T \to U$) if in $\Pb$ there is a production 
$T \vdash u$ with $u$ containing $U$. (Compare e.g. with {\sc Kuich}~\cite{Ku}.) 
We write $T \tos U$ if in $\Dcal$ there is an oriented  
path of length $\ge 0$ from $T$ to $U$. 

Consider the equivalence relation on $\V$ where $T \sim U$ if 
$T \tos U$ and $U \tos T$. The equivalence classes, denoted $\V(T)\,$ ($T \in \V$) 
are called the {\it strong components\/} 
of $\Dcal(\Ccal)$. The strong components are partially ordered: 
$\V(T) \pre \V(U)$ if there is an oriented path in $\Dcal$ from 
$T$ to $U$. 
A strong component is called \emph{essential}, if it is maximal in the above 
order. The graph is called \emph{strongly connected,} if the whole of
$\V$ is a single strong component. (These notions make sense for any oriented
graph.)  

The grammar $\Ccal$ (and the language it generates) is called \emph{ergodic},
if $\Dcal(\Ccal)$ is strongly connected. This notion was introduced 
by {\sc Ceccherini and Woess}~\cite{CeWo1}.

\begin{ass}\label{ass:umptions}
We consider an infinite symmetric, fully deterministic graph $\XX$ 
that admits a rooted tree set $\Ecal$ with finite type. By Proposition
\ref{pro:goodcuts} we may assume without loss of generality that $\Ecal$ is
separated and that
$\partial_{\Ecal} X$ and all $\partial_{\Ecal} C$ ($C \in \Ecal$) are connected.
We let $I = \{ 1, \dots, r\}$ be the set of types of cuts in $\Ecal$.
In addition, we set $C_0=\XX$ and $\partial C_0 = \partial_{\Ecal} C_0 = 
\partial_{\Ecal} \XX$ and consider all
maximal $C \in \Ecal$ as successors of $C_0$ (that is, $C_0 \to C$).
\end{ass}
 
 
In the proof of \cite[Thm. 4.2]{CeWo3}Theorem \ref{thm:characterize1}, 
we have constructed a deterministic pushdown automaton that accepts 
$L_{x_0,y_0}\,$, where $x_0, y_0 \in F$, and $F$ is the finite reference 
set with respect to which
the cones are defined (not necessarily a single root vertex). Replacing $F$ 
by $\partial_{\Ecal}\XX$ and the cones with 
respect  to $F$ by the cuts in $\Ecal$, we now construct an un-ambiguous grammar 
for the same language. We introduce the sets of variables 
\begin{equation}\label{eq:vars}
\begin{gathered}
\V_{\! 0} = \bigl\{ T_{x,y_0} :x \in \partial_{\Ecal} C_0\}\,, 
\quad
\V_{\! i} = \bigl\{ T_{x,y} :x \in \partial_{\Ecal} C_i\,, y \in  \partial C_i\}
\quad
(i \ge 1)\,, 
\AND\\
\V =\bigcup_{i=0}^r \V_{\! i}\,.
\end{gathered}
\end{equation}

For each $C \in \Ecal$, there are $i \in \{1, \dots, r\}$ and an isomorphism 
$\phi_C: C \to C_i$. For every $x \in X \setminus \partial_{\Ecal} X$, there
is a unique $C \in \Ecal$ such that $x \in \partial_{\Ecal} C$. We set 
$\phi(x)=\phi_C(x) \in \bigcup_{i=1}^r \partial_{\Ecal} C_i\,$.

The set of production rules $\Pb$ of our grammar is as follows. For all
$a, b \in \Si$, all $i,j \in \{0, \dots, r\}$ with $j \ge 1$, 
$x, x', x'' \in \partial_{\Ecal} C_i\,$,
$y \in  \partial C_i$
(with $y=y_0$ when $i=0$) and $\bar x, \bar y \in \partial C$,
where $C_i \to C$ in $\Ecal$ and $C$ is isomorphic with $C_j\,$,
\begin{equation}\label{eq:grammar}
\begin{array}{rll}
T_{y,y} &\vdash \epsilon 
\\
T_{x,y} &\vdash aT_{x',y}&\text{whenever}\; (x,a,x') \in E\,,\\
T_{x,y} &\vdash aT_{\phi(\bar x), \phi(\bar y)} bT_{x'',y}
 &\text{whenever}\; (x,a,\bar x)\,,\, (\bar y, b, x'') \in E\,.
\end{array}
\end{equation}
In passing, we observe that this
grammar has the so-called operator normal form (the right hand sides
of the productions contain no subword in $\V^2$).

\begin{lem}\label{lem:gram} \emph{(a)} Let the start symbol be 
$T_{x,y} \in \V_{\! i}\,$, where $i \in \{ 0, \dots, r\}$.
Then the grammar 
$\Ccal = (\V, \Si, \Pb, T_{x,y})$, with $\V$ as in \eqref{eq:vars} and $\Pb$ as in
\eqref{eq:grammar}, is un-ambiguous and generates the language
$$
L_{x,y}(C_i) = \{ \ell(\pi) : \pi \in \Pi_{x,y} \;\text{and}\;\pi \subset C_i\}\,.
$$
\emph{(b)} 
The dependency digraph $\Dcal(\Ccal)$ has the following properties. 
\begin{enumerate}
\item If $x,x'\in \partial_{\Ecal} C_i, y \in \partial C_i$ (with $y=y_0$ when $i=0$)
then there is an oriented 
path in $\Dcal(\Ccal)$ from $T_{x,y}$ to $T_{x',y}\,$.
\item
If $C_i \to C \in \Ecal$, where $C$ has type $j$, and if  
$x \in \partial_{\Ecal} C_i\,$, $y \in \partial C_i\,$,  
$x' \in \partial_{\Ecal} C_j$ and $y' \in \partial C_j\,$, then
there is an oriented path in $\Dcal(\Ccal)$ from $T_{x,y}$ to $T_{x',y'}\,$.
\end{enumerate}
\emph{(c)} In particular, if $\Ecal$ is also irreducible, then 
the strong compontents of $\Dcal(\Ccal)$ are $\V_{\! 0}$ and 
$\V_{\! \text{\rm ess}} = \V_1 \cup \dots \cup \V_s\,$, and 
$\V_{\! 0} \pre \V_{\! \text{\rm ess}}\,$.
\end{lem}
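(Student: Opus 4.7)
The three parts correspond to three distinct tasks: a combinatorial decomposition of paths into a derivation (part~(a)), and the translation of the structure of $\Ecal$ into structural properties of the dependency digraph (parts~(b) and~(c)).

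For part~(a), the strategy is to set up a bijection between rightmost derivations from $T_{x,y}$ in $\Ccal$ and paths $\pi \in \Pi_{x,y}$ with $\pi \subset C_i$, by induction on $|\pi|$ via the canonical decomposition of $\pi$ according to its first edge $(x, a, x_1)$. If $x = y$ and $|\pi| = 0$, use $T_{y,y} \vdash \epsilon$. Otherwise, exactly one of two cases occurs: either $x_1 \in \partial_{\Ecal} C_i$, matching the second production and allowing recursion on the shorter remainder; or $x_1$ lies in a (necessarily unique) successor $C$ of $C_i$, and by separation the first vertex reached in $C$ is on $\partial C$, so $x_1 = \bar x \in \partial C$. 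Then $\pi$ eventually exits $C$ for the final time at a unique $\bar y \in \partial C$ into some $x'' \in \partial_{\Ecal} C_i$, matching the third production; the enclosed subpath from $\bar x$ to $\bar y$ stays in $C$ and transports via the isomorphism $\phi_C \colon C \to C_j$ to an element of $L_{\phi(\bar x), \phi(\bar y)}(C_j)$, while the tail from $x''$ to $y$ is handled inductively. Because $\XX$ is fully deterministic, the first edge of $\pi$ and the exit point $\bar y$ are uniquely determined by $\pi$, so the decomposition is canonical and the grammar is un-ambiguous.

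For part~(b)(1), I would exploit the hypothesis from Assumptions~\ref{ass:umptions} that $\partial_{\Ecal} C_i$ is connected: pick a graph path $x = z_0, z_1, \dots, z_n = x'$ inside $\partial_{\Ecal} C_i$ with labels $a_l = \ell(z_l, a_l, z_{l+1})$. Each step yields a second-type production $T_{z_l, y} \vdash a_l T_{z_{l+1}, y}$ and hence a dependency edge $T_{z_l, y} \to T_{z_{l+1}, y}$; chaining produces the required oriented path. For part~(b)(2), I would concatenate three dependency walks: (i)~apply~(1) inside $C_i$ to move from $T_{x,y}$ to some $T_{\tilde x, y}$ with $\tilde x \in \partial_{\Ecal} C_i$ adjacent to a vertex $\bar x \in \partial C$ (such $\tilde x$ exists because every $\bar x \in \partial C$ has, by definition, a neighbour outside $C$, and separation places this neighbour in $\partial_{\Ecal} C_i$); (ii)~for the prescribed $y' \in \partial C_j$, pick $\bar y \in \partial C$ with $\phi(\bar y) = y'$ (possible since $\phi$ maps $\partial C$ bijectively onto $\partial C_j$) and apply a single third-type production $T_{\tilde x, y} \vdash a T_{\phi(\bar x), \phi(\bar y)} b T_{x'', y}$ to obtain the dependency edge $T_{\tilde x, y} \to T_{\phi(\bar x), y'}$; (iii)~apply~(1) inside $C_j$ to pass from $T_{\phi(\bar x), y'}$ to $T_{x', y'}$.

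For part~(c), I would combine (2) with irreducibility: for any $i, j \in \{1, \dots, r\}$, the cut $C_i$ contains a proper subcut $D$ of type $j$, and the geodesic $C_i = D_0 \supsetneq D_1 \supsetneq \dots \supsetneq D_n = D$ in $\Tcal_{\Ecal}$ gives a sequence of successor relations $D_{l-1} \to D_l$, so iterating~(2) produces a dependency path from any variable in $\V_i$ to any variable in $\V_j$. Hence $\V_{\text{ess}} = \V_1 \cup \dots \cup \V_r$ is one strong component. Finally, $\V_{\! 0}$ is a distinct strictly lower component: $C_0 = \XX$ has at least one successor, so by~(2) there is a dependency path from $\V_{\! 0}$ into $\V_{\text{ess}}$, while a direct inspection of~\eqref{eq:grammar} shows that no production with left-hand side in some $\V_i$ with $i \ge 1$ has a right-hand side mentioning a variable in $\V_{\! 0}$, so no return path exists. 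The main bookkeeping obstacle I anticipate is (b)(2), specifically the verification that the auxiliary $\tilde x$ with a neighbour in $\partial C$ exists and that the third-type production is simultaneously available for every required choice of $\bar y \in \partial C$.
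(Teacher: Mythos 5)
Your overall route matches the paper's: decompose a path by its first edge to produce a canonical derivation (part (a)), translate connectedness of $\partial_{\Ecal}C_i$ and the boundary-adjacency structure of successor cuts into oriented paths in $\Dcal(\Ccal)$ (part (b)), and iterate (b)(2) together with irreducibility to collapse $\V_1\cup\dots\cup\V_r$ into a single strong component above $\V_{\!0}$ (part (c)). The paper is terser, especially at (c), which it dismisses as immediate; your elaboration there is correct, including the observation that no production with left-hand side in $\V_i$, $i\ge 1$, can produce a $\V_{\!0}$-variable, so no return path exists.

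There is one genuine slip in your treatment of (a), in the second case of the decomposition. You take $\bar y$ to be the vertex where $\pi$ exits the successor cut $C$ \emph{for the final time}, and then assert that the enclosed subpath from $\bar x$ to $\bar y$ stays in $C$. These two statements are contradictory whenever $\pi$ leaves $C$ and re-enters it: the segment from $\bar x$ to the final exit would then wander outside $C$. The production $T_{x,y}\vdash aT_{\phi(\bar x),\phi(\bar y)}bT_{x'',y}$ corresponds to taking the \emph{first} exit from $C$, which is exactly what forces the middle subpath to lie within $C$ (and it is this first-exit condition, not the last-exit one, that makes the decomposition unique and hence the grammar un-ambiguous). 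The later returns to $C$, if any, are absorbed recursively in the derivation of $T_{x'',y}$. With ``final'' replaced by ``first'' your argument is sound and coincides with the paper's. A second, smaller cosmetic point: the role of full determinism is to pass from a word $w$ to a unique path $\pi$; once $\pi$ is fixed, its first edge and first-exit vertex are determined by $\pi$ alone, not by determinism.

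In (b)(2) you proceed in a slightly different order from the paper (you first move to any $\tilde x$ adjacent to $\partial C$, apply one third-type production, and then use (1) inside $C_j$ to reach the prescribed $x'$; the paper instead reduces immediately to $x'\in\partial C_j$ and chooses $\bar x$ as its $\phi$-preimage). Both variants are valid and rely on the same two facts — that every point of $\partial C$ has a neighbour in $\partial_{\Ecal}C_i$, and that separation guarantees $\partial C_j\subset\partial_{\Ecal}C_j$ so the resulting variables actually lie in $\V_j$. The bookkeeping obstacle you flagged at the end is therefore not an obstacle.
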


\begin{proof} (a) 
The language generated by $T_{x,y}$ is $L_{x,y}(C_i)$, because each 
non-trivial path from  $x$ to $y$ within $C_i$ 
(where $x \in \partial_{\Ecal} C_i$ and $y \in \partial C_i$) decomposes 
uniquely in the following way. 

- Either its first edge goes from $x \in \partial_{\Ecal} C_i$ to some 
$x' \in \partial_{\Ecal} C_i\,$,
and it is followed by a path from $x'$ to $y$ within $C_i\,$, 
 
- or else the first edge goes to a vertex $\bar x \in \partial C$, where 
$C_i\to C$ (and $C$ is isomorphic with some $C_j$),  
this is followed by a path within $C$ from $\bar x$ to some 
$\bar y \in \partial C$ and an edge from $\bar x$ to some $x'' \in
\partial_{\Ecal} C_i\,$, and then it terminates with a path from $x''$ to $y$ 
within $C_i\,$.
The reader is invited to draw a figure.

The grammar is un-ambiguous by its construction.
\\[5pt]
(b) Regarding (1), since $\partial_{\Ecal}C_i$ is connected, there is a path 
in the graph $\XX$  from $x$ to $x'$ that lies within $\partial_{\Ecal}C_i$. 
Using the second type of the production rules in \eqref{eq:grammar},
we see that this translates into a path in the graph $\Dcal(\Ecal)$ 
from $T_{x,y}$ to $T_{x',y}\,$. It lies within $\V_{\! i}\,$.

(2) Let $C$ be a successor cone of $C_i$ that has type $j$, and let 
$x' \in \partial_{\Ecal} C_j$ and $y' \in \partial C_j$ be as stated. 
By (1), it is sufficient to consider only the case when $x' \in \partial C_j$.
Let  $\bar x, \bar y \in \partial C$ be the (unique) elements for which 
$\phi(\bar x)=x'$ and $\phi(\bar y)=y'$. There must be 
$x'', x''' \in \partial_{\Ecal} C_i$ and $a,b \in \Si$ such that $(x'',a,\bar x),
(\bar y, b, x''') \in E$. 

By (1), in $\Dcal(\Ccal)$ there is a path from $T_{x,y}$ to $T_{x'',y}\,$.
In our grammar, there is the production 
$T_{x'',y} \vdash aT_{x',y'} bT_{x''',y}\,$. Thus, in $\Dcal(\Ccal)$ there is 
an edge from $T_{x'',y}$ to $T_{x',y'}\,$, and we also get a path
from $T_{x,y}$ to $T_{x',y'}\,$. This proves (2) and completes (b).
\\[5pt]
(c) is an immediate consequence of (b).
\end{proof}

\begin{cor}\label{cor:cuts-cf}
Let $\Ecal$ be a rooted, oriented tree set consisting of
cuts of the fully deterministic, symmetric labelled  graph $X$.
Suppose that $\Ecal$ tesselates $\XX$ and has finite type.

Then $L_{x,y}$ is context-free for every $x, y \in X$.

In particular, $\XX$ is a context-free graph in the sense of 
\cite{MS2}.
\end{cor}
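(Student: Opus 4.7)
The plan is to deduce the corollary directly from Lemma~\ref{lem:gram}, once the tree set has been brought into the form required by Assumption~\ref{ass:umptions} with $x$ and $y$ sitting in the root set. Fix $x, y \in X$. I would first apply Proposition~\ref{pro:goodcuts}(3) to $F = \{x, y\}$ to obtain a modified rooted tree set $\Ecal'$ with $\{x,y\} \subset \partial_{\Ecal'} X$ and with $\partial_{\Ecal'} X$ and all $\partial_{\Ecal'} C$ connected. I would then also invoke Proposition~\ref{pro:goodcuts}(1) (say with $\rho=1$) to make $\Ecal'$ separated. Each of these steps only deletes, shrinks, or decomposes cuts, and therefore preserves tesselation and finite type; consequently $\Ecal'$ satisfies Assumption~\ref{ass:umptions}.

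Next, with the convention $C_0 = X$ of Assumption~\ref{ass:umptions}, I would set the distinguished vertex of $\V_0$ to be $y_0 := y$, and take the start symbol of the grammar \eqref{eq:vars}--\eqref{eq:grammar} to be $T_{x,y} = T_{x, y_0} \in \V_0$. This choice is legitimate because $x \in \partial_{\Ecal'} X = \partial_{\Ecal'} C_0$. By Lemma~\ref{lem:gram}(a), the resulting un-ambiguous grammar generates $L_{x,y}(C_0)$. Since $C_0 = X$, every path in $X$ from $x$ to $y$ is automatically contained in $C_0$, so $L_{x,y}(C_0) = L_{x,y}$, proving that $L_{x,y}$ is context-free.

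For the ``in particular'' assertion, I specialise to $x = y = o$, where $o$ is any chosen root vertex of $X$. The first part then shows that $L_{o,o}$ is context-free, and by the equivalence recalled in Section~\ref{sec:intro} (proved in~\cite{CeWo3}), this is the same as saying that $X$ is a context-free graph in the sense of Muller and Schupp~\cite{MS2}.

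I do not expect any serious obstacle here: the substantive content has already been packaged into Proposition~\ref{pro:goodcuts} and Lemma~\ref{lem:gram}. The one point deserving verification is that the modifications from Proposition~\ref{pro:goodcuts}(3) do not destroy finite type when new vertices are forced into the root set; this is clear because that step amounts to deleting the finitely many cuts lying on the tree-geodesics from $\xi_0$ to $\xi(x)$ and $\xi(y)$, an operation which cannot introduce new isomorphism classes of cuts.
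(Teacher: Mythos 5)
Your proposal is correct and follows exactly the route the paper intends: the corollary is stated without explicit proof because it is meant to follow immediately from Lemma~\ref{lem:gram}(a) with $i=0$, once Proposition~\ref{pro:goodcuts}(3) (applied to $F=\{x,y\}$) has placed $x$ and $y$ in $\partial_{\Ecal}\XX$ as required by Assumptions~\ref{ass:umptions}, so that $L_{x,y}(C_0)=L_{x,y}$ with $C_0=X$. Your closing check that the modification in Proposition~\ref{pro:goodcuts}(3) only deletes finitely many cuts and hence cannot introduce new isomorphism types is the right point to verify, and the ``in particular'' clause is exactly the $x=y=o$ specialization combined with the equivalence from~\cite{CeWo3} recalled in Section~\ref{sec:intro}.
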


\begin{rmk}\label{rmk:non-symm}
When we apply the above to a Schreier graph $\XX(G,K,\psi)$, the assumption of 
symmetry of $\psi$ is not crucial. Suppose the graph has a tree set 
with the above properties for some (and hence every) symmetric semigroup
presentation. If $\psi$ is not symmetric then we can symmetrise it by adding
further elements to $\Si$ and defining an appropriate symmetric 
extension $\ol\psi$ of $\psi$ such that each oriented edge of $X(G,K,\psi)$
is also an edge of $X(G,K,\ol\psi)$, and for every pair of inverse edges of 
$X(G,K,\ol\psi)$, at least one is an edge of $X(G,K,\psi)$. 
If we have a tree set for the symmetrised graph, then we can also use it
for the original graph. That is, while our cuts come from $X(G,K,\ol\psi)$,
in the resulting grammar \eqref{eq:grammar}, we refer to the edge set
$E$ of $X(G,K,\psi)$. This grammar
will generate the required languages $L_{x,y}$ for the original 
non-symmetric graph.

One problem, however, is that when the tree set of $\XX(G,K,\ol\psi)$
is irreducible, it is not necessarily true that the grammar
associated with  $\XX(G,K,\psi)$ has the properties derived in Lemma
\ref{lem:gram}(c). This is the primary obstacle when one wants
to extend the results of the next sections directly to the non-symmetric case.
\end{rmk}

We can adopt the more general definition of a fully deterministic, symmetric 
graph $\XX$ to be context-free when
it has a tree-set  of finite type that tesselates $\XX$. We see that this 
is in reality equivalent with the definition in terms of cones with respect
to a finite set $F$, or more specifically, with respect to a single point $o$.

The reason why we have embarked on the fatigue regarding cuts and tree
sets is that we want to have a ``good'' grammar, that is, one as  
in Lemma \ref{lem:gram}(c). This will have an interesting application,
as we shall see in the next section. The collection of all cones with respect 
to a given set $F$ is of course a 
special case of a tree set. Our main problem was that when working
just with cones, it is by now means clear how to show that irreducibility
can in some suitable way be transferred to the cones of a symmetric 
Schreier graph $ X(G,K,\psi')$ from those of $ X(H,K,\psi)$, where 
$[G:H] < \infty$. For tree sets, we can do this, so that we get a ``good''
grammar, for example, for all symmetric Schreier graphs 
$X(G, \K, \psi)$, where $G$ is a virtually free group and $\K$ is a finitely
generated free subgroup of $G$. This applies, in particular, to
Cayley graphs of virtually free groups.
 
\section{Generating functions, tree sets, and random walks}\label{sec:genfun}

Let $\Ccal = (\V,\Si,\Pb,S)$ be a \emph{non-ambiguous} context-free grammar.
With each element $a \in \Si$, we associate a positive real number $\mu(a)$.
Furthermore, let $z$ be a complex variable.
We also associate a complex variable $y_T$ with every $T \in \V$ and
let $\yy = (y_T)_{T \in \V}\,$, a row vector.
Then we define 
\begin{equation}\label{eq:substitute}
\begin{gathered}
\zeta(a) = \mu(a) z  \;\; \text{for}\;\; a \in \Si\,,\quad
\zeta(T) = y_T  \;\; \text{for}\;\; T \in \V\,,\AND\\
\zeta(u) = \zeta(u_1) \cdots \zeta(u_n) \,,\quad \text{if}\; 
u=u_1 \cdots u_n  \in (\Si \cup \V)^*\,.
\end{gathered}
\end{equation}
With the language $L_T$ generated by $T \in \V$, we associate the 
generating function
\begin{equation}\label{eq:f_T}
f_T(z) = \sum_{w \in L_T} \zeta(w)\,.
\end{equation}
This defines an analytic function in a neighbourhood of the origin in
$\C^{|\Si|}$. Indeed, if $\mu(a)|z| < 1/|\Si|$ for every $a \in \Si$ then the 
series converges absolutely.
With $T$, we also associate the polynomial
\begin{equation}\label{eq:P_T}
\Pol_T(z;\yy) = \sum_{u\,:\, T \vdash u} \zeta(u)
\end{equation}
in $\yy$ and $z$. 

We can assume without loss of generality (up to a simple modification
of the production rules that preserves un-ambiguity) that our
grammar has no 
chain rules (productions of the form $T \vdash U$, where $T, U \in \V$).

Then the fundamental theory of {\sc Chomsky and
Schutzenberger}~\cite{ChSc} implies that the functions $f_T(z)$
satisfy the system of equations
\begin{equation}\label{eq:eqs}
f_T(z) = \Pol_T\bigl(z; f_U(z), U \in \V\bigr)\,,\; T \in \V\,,
\end{equation}
where each variable $y_U$ has been replaced by the function $f_U(z)$.
Compare also with {\sc Kuich and Salomaa}~\cite[\S 14]{KuSa}.

\smallskip

We return to the assumptions \ref{ass:umptions} regarding the graph 
$\XX$ and the tree set $\Ecal$, which is also assumed to be irreducible. 
From now on, the grammar $\Ccal$ will always be the one of 
\eqref{eq:grammar}. We let $\Si$ be the (symmetric) label alphabet of $\XX$.
In \eqref{eq:substitute}, we assume that $\mu(a) > 0$ for all
$a \in \Si$. It is no loss of generality to require that 
$\mu(\Si) = \sum_{a \in \Si} \mu(a)= 1$; otherwise, we just can
replace $z$ with $z/\mu(\Si)$. Then $\mu$ gives rise to a \emph{random walk} 
on $X$. This is the time-homogeneous Markov chain $(Z_n)_{n\ge 0}$ whose state 
space is $\XX$ and whose one-step transition probabilities are 
\begin{equation}\label{eq:transprob}
p(x,y) = \sum_{a \in \Si\,:\,(x,a,y) \in E} \mu(a)\,.
\end{equation}
See the monograph by {\sc Woess}~\cite{Wbook} for an outline of the theory
of random walks on infinite graphs and groups. We use the terminology
of that reference.

We write $p^{(n)}(x,y) = \Prob[Z_n=y\mid Z_0=x]$ for the probability that 
the random walk starting at $x$ is in $y$ after $n$ steps. This is just
the $(x,y)$-entry of the matrix power $P^n$, where 
$P=\bigl(p(x,y)\bigr)_{x,y\in X}$ is the transition matrix.
The \emph{Green function} of the random walk is
\begin{equation}\label{eq:green}
G(x,y|z) = \sum_{n=0}^{\infty} p^{(n)}(x,y)\,z^n = G(x,y|z)\,, \quad z \in \C\,.
\end{equation}
It is a well known consequence of connectedness of $\XX$ (i.e., irreducibility
of the transition matrix) that its radius of convergence 
\begin{equation}\label{eq:Rmu}
R_{\mu} = 1 \big/ \limsup_n p^{(n)}(x,y)^{1/n}
\end{equation}
is the same for all $x,y \in X$ and that either $G(x,y|R_{\mu}) < \infty$
for all $x,y$ or $G(x,y|R_{\mu}) = \infty$ for all $x,y$.

For the variables of the grammar $\Ccal$, the associated generating functions
now have the following interpretations:
\begin{enumerate}
\item If $T = T_{x,y_0}\in \V_{\! 0}$, where $x, y_0\in \partial_{\Ecal} X$ 
then
$$
f_T(z) = G(x,y_0|z)\,.
$$
\item If $T = T_{x,y}\in  \V_{\!\text{\rm ess}}\,$, where 
$x \in \partial_{\Ecal}C_i\,, y \in \partial C_i$ with $i \ge 1$ then 
$$
f_T(z) = \sum_{n=0}^{\infty} p_{C_i}^{(n)}(x,y)\, z^n\,,
$$
where $p_{C_i}^{(n)}(x,y)$ is the probability that the random walk starting at
$x$ is in $y$ at the $n$-th step, without having left $C_i\,$.  
\end{enumerate}

For $T \in \V$, we write $\delta_T=1$
if there is a production $T \vdash \epsilon$, and $\delta_T=0$, otherwise.
Then we have for our grammar
\begin{equation}\label{eq:poly}
\Pol_T(z;\yy) = \delta_T + \sum_{T \vdash aU} \mu(a)\,z\, y_U
+ \sum_{T \vdash aVbU} \mu(a)\,\mu(b)\,z^2\, y_V\,y_U\,,
\end{equation} 
where the sums range over all productions of \eqref{eq:grammar}
whose left hand side is $T \in \V$.

We study analytic properties of the associated generating functions
according to \eqref{eq:substitute} and \eqref{eq:f_T}. 

We first restrict the grammar to $\V_{\!\text{\rm ess}}\,$, as defined in
Lemma \ref{lem:gram}. That is, we
keep only those productions whose left hand sides belong to
$\V_{\!\text{\rm ess}}\,$. Note that all variables occuring in its right hand 
sides also belong to $\V_{\!\text{\rm ess}}\,$.
We get a grammar $\Ccal_{\text{\rm ess}}$ that is un-ambiguous and ergodic,
and there are productions whose right hand sides contain two variables
(the grammar is \emph{non-linear}).

Consider the associated system of equations \eqref{eq:eqs}. 
Equations of this type occur quite frequently. 
We appeal to methods that have been developped in the context of random
walks on free groups and trees by {\sc Lalley}~\cite{La1}, \cite{La2}; 
compare also with the exposition in {\sc Woess}~\cite[\S19.B]{Wbook} and with
{\sc Nagnibeda and Woess}~\cite{NaWo}. In the more general 
context of analytic combinatorics, see the monographs of {\sc Flajolet and
Sedgewick}~\cite[\S VII.6]{FlSe} and {\sc Drmota}~\cite[\S 2.2.5]{Dr}. 
For the setting of generating functions associated with grammars, see also
\cite{CeWo1}. 
Appealing to \cite[Thm. VII.6]{FlSe}, we get the following.

\begin{pro}\label{pro:genfunprop} \emph{(a)} All the power series $f_T(z)$, 
$T = T_{x,y} \in \V_{\!\text{\rm ess}}\,$, have the same radius of convergence
$R$.
\\[4pt]
\emph{(b)} One has $f_T(R) < \infty\,$, and $z=R$ is a
simple (i.e., quadratic) branching point of each $f_T(z)$. There
are functions $g_T(z)$ and $h_T(z)$, which near $z=R$ are analytic and 
real-valued for real $z$, such that $h_T(R) > 0$ and the identity
$$
f_T(z) = g_T(z) - h_T(z)\sqrt{R-z} 
$$
%
is valid in a neighbourhood of the point $R$ in $\C$, except for real $z > R$.
\end{pro}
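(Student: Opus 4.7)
The plan is to apply the Drmota--Lalley--Woods theorem, in the form of Theorem VII.6 of \cite{FlSe}, to the polynomial system \eqref{eq:eqs} obtained by restricting the grammar $\Ccal$ to $\V_{\!\text{\rm ess}}$. That theorem delivers exactly the conclusions of both (a) and (b), provided we verify four structural hypotheses on the system $\yy = \Phi(z,\yy)$ with $\Phi_T = \Pol_T$: (i) it is polynomial with non-negative coefficients; (ii) the dependency graph of the system is strongly connected; (iii) at least one equation is genuinely non-linear; and (iv) the system is ``proper'' in the sense that it determines the $f_T(z)$ uniquely as analytic functions near the origin and the Jacobian matrix has a well-behaved Perron eigenvalue approaching $1$ as $z \uparrow R$.

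First I would verify (i) directly from the explicit form \eqref{eq:poly}: every $\Pol_T(z;\yy)$ is a polynomial in $z$ and in the variables $y_U$, $U \in \V_{\!\text{\rm ess}}$, whose coefficients are products of the positive probabilities $\mu(a)$ together with a constant $\delta_T \in \{0,1\}$. For (ii), Lemma \ref{lem:gram}(c) asserts that irreducibility of $\Ecal$ forces $\V_{\!\text{\rm ess}} = \V_1 \cup \dots \cup \V_r$ to be a single strong component of $\Dcal(\Ccal)$, so the restricted dependency digraph is strongly connected. For (iii), irreducibility of $\Ecal$ guarantees that each representative cut $C_i$ has at least one successor cut, so the third type of production in \eqref{eq:grammar} is present, contributing a quadratic monomial $\mu(a)\mu(b)\,z^2\,y_V\,y_U$ to some $\Pol_T$.

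For (iv) I would use the probabilistic interpretation noted just before \eqref{eq:poly}: for $T = T_{x,y} \in \V_{\!\text{\rm ess}}$ the coefficients of $f_T(z)$ are the restricted transition probabilities $p_{C_i}^{(n)}(x,y) \in [0,1]$, so each $f_T(z)$ converges and is analytic at least for $|z| < 1$ and is real-valued with non-negative Taylor coefficients. A common radius of convergence $R$ for all $T \in \V_{\!\text{\rm ess}}$ then follows from strong connectedness by the standard argument that each $f_T$ dominates (up to a polynomial factor in $z$ and the other $f_U$'s) each $f_U$ via an oriented path in $\Dcal(\Ccal)$. With (i)--(iv) in place, the Drmota--Lalley--Woods theorem yields directly that $f_T(R) < \infty$, that $z=R$ is a simple quadratic branch point common to all $f_T$, and the Puiseux expansion $f_T(z) = g_T(z) - h_T(z)\sqrt{R-z}$ with $g_T, h_T$ analytic and real near $z=R$ and $h_T(R) > 0$.

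The main obstacle will be the ``properness''/Jacobian condition. The presence of the $\epsilon$-productions $T_{y,y} \vdash \epsilon$ means that $\Phi(0,\mathbf{0})$ is not identically zero, so the standard bootstrap $\yy^{(n+1)} = \Phi(z,\yy^{(n)})$ starting at $\yy^{(0)} = \mathbf{0}$ needs to be set up carefully to produce the analytic branch. To handle this I would argue that, by ergodicity and non-linearity, the Jacobian $\partial \Phi/\partial \yy$ evaluated at $(z,\mathbf{f}(z))$ is a non-negative irreducible matrix whose Perron eigenvalue is a strictly increasing function of real $z \in (0,R)$ and attains the value $1$ precisely at $z = R$; this is the characteristic-system condition underlying \cite[Thm.~VII.6]{FlSe} and directly produces the square-root singularity with $h_T(R) > 0$. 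Once this single analytic fact is checked, (a) and (b) are simultaneous outputs of the theorem.
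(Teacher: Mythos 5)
Your plan matches the paper's proof exactly: the paper also restricts the grammar to $\V_{\!\text{\rm ess}}$, observes that it is un-ambiguous and ergodic (so the dependency digraph is strongly connected by Lemma \ref{lem:gram}(c)) and non-linear, and then cites \cite[Thm.~VII.6]{FlSe} directly. Your (i)--(iv) are precisely the hypotheses the paper implicitly checks, so the two arguments coincide.

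One small correction on the ``obstacle'' paragraph: the $\epsilon$-productions (i.e.\ the constant $\delta_T$ in \eqref{eq:poly}) are harmless for the bootstrap $\yy^{(n+1)} = \Phi(z,\yy^{(n)})$. The iteration converges coefficient-by-coefficient even when $\Phi(0,\mathbf{0}) \neq \mathbf{0}$; the quantity that controls convergence is the Jacobian $\partial\Phi/\partial\yy$ at $z=0$, and properness fails only when that Jacobian is not nilpotent. In a grammar-derived system that happens precisely when there are chain rules $T\vdash U$, since those contribute entries $\partial\Pol_T/\partial y_U$ with no factor of $z$. The paper dispatches this by modifying the grammar to eliminate chain rules (preserving un-ambiguity) just before \eqref{eq:eqs}; after that every nonzero Jacobian entry carries at least one power of $z$, the Jacobian at $z=0$ is the zero matrix, and the bootstrap is well-behaved. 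So you can drop the worry about $\epsilon$-productions and instead point to the absence of chain rules as the reason properness holds; the rest of your argument then goes through verbatim.
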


Now that we have good information about the generating functions associated
with the variables in $\V_{\!\text{\rm ess}}\,$, let us investigate how
they determine the ``remaining'' functions $f_T(z)$, $T \in \V_{\! 0}\,$.
For such $T$, the polynomial $\Pol_T(z;\yy)$ of \eqref{eq:poly} is such
that of the variables $U,V$ appearing there, one has $U \in \V_{\! 0}$ and
$V \in \V_{\!\text{\rm ess}}\,$. That is, the column vector 
$$
\ff(z)= \bigl( f_T(z) \bigr)_{T \in \V_{\! 0}}
$$ 
is determined by the linear system
\begin{equation}\label{eq:linsyst}
\ff(z) = \ee + Q(z)\ff(z)\,,
\end{equation}
where $\ee = \bigl(\delta_T\bigr)_{T \in \V_{\! 0}}\,$ (another column vector),
and $Q(z) = \bigl(q_{T,U}(z)\bigr)_{T,U\in\V_{\! 0}}$ is the matrix 
over $\V_{\! 0}$ 
with entries
\begin{equation}\label{eq:qTU} 
q_{T,U}(z) = \sum_{T \vdash aU} \mu(a)\,z 
+ \sum_{T \vdash aVbU} \mu(a)\,\mu(b)\,z^2\, f_V(z)\,,
\end{equation}
where the first sum ranges over all $a \in \Si$ such that we have
the production $T \vdash aU$, and the second sum ranges over
all $a, b \in \Si$ and $V \in \V_{\!\text{\rm ess}}$ such that we have
the production $T \vdash aUbV$. Note that there is at least one
term of this last type, that is, $Q(z)$ contains at least one of the
functions $f_V(z)$, $V \in \V_{\!\text{\rm ess}}\,$, in one of its terms. 

\begin{lem}\label{lem:irred}
For $0 < z \le R$, the non-negative matrix $Q(z)$ is irreducible and depends
continuously on $z$ (analytically for $0 < z < R$).
Furthermore, in a complex neighbourhood of the point $R$, except for real 
$z> R$, we can decompose 
$$
Q(z) = A(z) - \sqrt{R-z}\,B(z)\, 
$$
where the matrices $A(z)$ and $B(z)$ are analytic functions of $z$
near $R$, and $B(R)$ is a non-negative matrix that does not vanish. 
\end{lem}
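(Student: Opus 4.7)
The plan is to verify each claim separately, drawing directly on the explicit formula \eqref{eq:qTU} and on the information about the $f_V$, $V \in \V_{\!\text{\rm ess}}$, supplied by Proposition \ref{pro:genfunprop}. Non-negativity of $Q(z)$ for $0 < z \le R$ is immediate: every coefficient $\mu(a), \mu(b)$ is positive, $z^2 > 0$, and each $f_V$ is a power series with non-negative coefficients that converges at $R$ by Proposition \ref{pro:genfunprop}(b). For the same reason each $f_V$ is analytic on the open disc $|z|<R$ and continuous on the closed segment $(0,R]$, so the entries $q_{T,U}(z)$, being polynomials in $z$ and the $f_V(z)$, inherit analyticity on $(0,R)$ and continuity at $R$.

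For irreducibility, I would apply Lemma \ref{lem:gram}(b)(1) to the block $i=0$. Given two variables $T_{x,y_0}$ and $T_{x',y_0}$ in $\V_{\! 0}$, that lemma supplies a path in the dependency digraph $\Dcal(\Ccal)$ from $T_{x,y_0}$ to $T_{x',y_0}$ which, as is evident from its construction, uses only productions of the second type in \eqref{eq:grammar}, i.e.\ productions $T \vdash aU$ with $T, U \in \V_{\! 0}$. Each such edge corresponds to a strictly positive first-sum contribution to $q_{T,U}(z)$ for $z>0$. Composing these we see that some power $Q(z)^n$ has a positive $(T_{x,y_0},T_{x',y_0})$-entry, which is the defining property of irreducibility.

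For the decomposition near $z = R$, I would simply substitute the expansion from Proposition \ref{pro:genfunprop}(b), namely $f_V(z) = g_V(z) - h_V(z)\sqrt{R-z}$ with $g_V, h_V$ analytic at $R$ and real on the reals, into \eqref{eq:qTU}. Collecting terms gives
\begin{equation*}
A_{T,U}(z) = \sum_{T \vdash aU} \mu(a)\,z + \sum_{T \vdash aVbU} \mu(a)\mu(b)\,z^2\,g_V(z), \qquad
B_{T,U}(z) = \sum_{T \vdash aVbU} \mu(a)\mu(b)\,z^2\,h_V(z),
\end{equation*}
both analytic in a neighbourhood of $R$ off the cut $(R,\infty)$, and yielding $Q(z) = A(z) - \sqrt{R-z}\,B(z)$. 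Since $h_V(R) > 0$ and all coefficients are positive, $B(R)$ is a non-negative matrix whose $(T,U)$-entry is strictly positive precisely when there exists a production of the third type in \eqref{eq:grammar} with left-hand side $T$, middle variable $V \in \V_{\!\text{\rm ess}}$, and right variable $U$.

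The one point that needs a small argument, which I expect to be the only real sticking point, is that $B(R)$ does not vanish identically, equivalently that at least one production $T \vdash a V b U$ with $T,U \in \V_{\! 0}$ and $V \in \V_{\!\text{\rm ess}}$ appears in the grammar. This is exactly the assertion already flagged in the paragraph after \eqref{eq:qTU}, and I would argue it as follows: the set $\partial_{\Ecal}\XX$ is finite while $\XX$ is infinite, so in $\XX$ there must be at least one edge $(x,a,\bar x) \in E$ with $x \in \partial_{\Ecal}\XX$ and $\bar x$ lying in some maximal cut $C \in \Ecal$, necessarily of some type $j \in \{1,\dots,r\}$. By symmetry there is also a return edge $(\bar y, b, x'')$ from $\partial C$ to $\partial_{\Ecal} \XX$, and the corresponding production of the third type in \eqref{eq:grammar}, with $V = T_{\phi(\bar x),\phi(\bar y)} \in \V_{\!\text{\rm ess}}$, contributes a strictly positive entry to $B(R)$.
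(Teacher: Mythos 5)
Your proof is correct and follows essentially the same route as the paper's. The one notable repackaging is in the irreducibility step: the paper observes directly that $q_{x,x'}(z) \ge \ol p_{x,x'}\,z$, where $\ol P=(\ol p_{x,x'})$ is the restriction of the transition matrix to the connected set $\partial_{\Ecal}X$ (connectedness of $\partial_{\Ecal}X$ having been arranged via Proposition~\ref{pro:goodcuts}(3)), and concludes that $Q(z)$ dominates an irreducible matrix; you go through Lemma~\ref{lem:gram}(b)(1), whose proof invokes that same connectivity, so the two arguments boil down to the same observation. You also spell out the non-vanishing of $B(R)$ (existence of at least one production of the third type with left and right variables in $\V_{\!0}$), which the paper only asserts in passing after \eqref{eq:qTU}; your argument via finiteness of $\partial_{\Ecal}\XX$, an outgoing edge to a maximal cut $C$, and the inverse edge supplied by symmetry, is a sound way to make that assertion explicit, provided one notes (as you implicitly rely on) that separation of $\Ecal$ ensures $\partial C_j\subset\partial_{\Ecal}C_j$, so that $T_{\phi(\bar x),\phi(\bar y)}$ really is a variable in $\V_j$.
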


\begin{proof} It is clear that our matrix is continuous for $0 \le z \le R$,
resp. analytic for $0 \le z < R$.
Now recall that each variable in $\V_{\! 0}$ has the form $T=T_{x,y_0}\,$,
where $x \in \partial_{\Ecal} X$. We may as well replace each $T$ with the corresponding
$x$ and index $\ff(z)$, $\ee(z)$ and $Q(z)$ accordingly. In particular, when $T=T_{x,y_0}$
and $U=T_{x',y_0}$ then $q_{T,U}(z) = q_{x,x'}(z)$, and we see
that in this notation, 
\begin{equation}\label{eq:barp}
q_{x,x'}(z) \ge \ol p_{x,x'}z\,,\quad\text{where}\quad
\ol p_{x,x'} = \sum_{a\,:\, (x,a,x') \in E} \mu(a)\,,
\quad\text{if} \; x,x' \in \partial X\,.
\end{equation}
But already the matrix 
$\ol P = \bigl(\ol p_{x,x'} \bigr)_{x,x'\in \partial_{\Ecal} X}$ is 
irreducible, since by our construction, $\partial_{\Ecal} X$ is a connected 
subgraph of $X$. 

The decomposition of $Q(z)$ is an immediate consequence of 
\eqref{eq:qTU} and Proposition \ref{pro:genfunprop}.
\end{proof}

We shall need the following ``side-product'' of the Perron-Frobenius theory of
non-negative matrices. 

\begin{lem}\label{lem:laprime}
Let $M(z)$, $z \in (\alpha\,,\,\beta) \subset \R$, be irreducible, 
non-negative square matrices of the same dimension whose entries are 
real-analytic functions of $z$. 
Let $\la(z)$ be the largest positive eigenvalue of $M(z)$, and let
$\vv(z)^t$ and $\ww(z)$ be the (strictly positive) left and right 
Perron-Frobenius eigenvectors, that is
$$
\vv(z)^t M(z) = \la(z)\cdot \vv(z)^t \AND M(z)\ww(z) = \la(z)\cdot \ww(z)\,,
$$
normalized such that 
$\langle \vv(z),\uno\rangle = \langle \vv(z),\ww(z)\rangle = 1\,.$
(Here, $\langle \cdot,\cdot \rangle$ is the inner product, and $\uno$
is the vector with all entries $=1$.)

Then $\la(z)$, $\vv(z)$ and $\ww(z)$ are analytic functions of 
$z \in (\alpha\,,\,\beta)$, and 
$$
\la'(z) = \vv(z)^t M'(z) \ww(z)\,.
$$
(Derivatives with respect to $z$.)
\end{lem}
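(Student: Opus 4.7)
The plan is to invoke standard analytic perturbation theory for simple eigenvalues, then obtain the derivative formula by a direct computation from the eigenvalue equation.

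First, I would fix $z_0 \in (\alpha,\beta)$ and observe that by Perron--Frobenius, since $M(z_0)$ is irreducible and non-negative, its spectral radius $\la(z_0)$ is a simple root of the characteristic polynomial $p(\zeta;z_0) = \det(\zeta I - M(z_0))$, and the associated left and right eigenvectors are (up to scaling) strictly positive. Since the entries of $M(z)$ are real-analytic in $z$, so are the coefficients of $p(\zeta;z)$. By the analytic implicit function theorem applied to $p(\zeta;z)=0$ at $(\la(z_0),z_0)$ (where $\partial_\zeta p \ne 0$ precisely because the eigenvalue is simple), $\la(z)$ extends to a real-analytic function in a neighbourhood of $z_0$, and continues to coincide with the Perron--Frobenius eigenvalue by continuity together with simplicity.

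Next, I would show that the normalized eigenvectors $\ww(z)$ and $\vv(z)^t$ are analytic. For this the cleanest route is via the spectral projector: choose a small circle $\gamma$ in $\C$ enclosing $\la(z_0)$ and no other eigenvalue of $M(z_0)$; for $z$ near $z_0$ the same holds for $M(z)$, and
\[
\Pb(z) = \frac{1}{2\pi i}\oint_\gamma \bigl(\zeta I - M(z)\bigr)^{-1}\, d\zeta
\]
is an analytic family of rank-one projectors onto the Perron eigenspace. Then $\ww(z)$ is obtained by projecting a fixed reference vector $\uno$ and rescaling so that $\langle \vv(z),\ww(z)\rangle = 1$; similarly $\vv(z)^t$ is obtained from $\Pb(z)^t$ with the normalization $\langle \vv(z),\uno\rangle=1$. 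Both normalizations are analytic non-vanishing functions of $z$ near $z_0$, so $\ww(z)$ and $\vv(z)$ are analytic.

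Finally, I would derive the derivative formula by differentiating $M(z)\ww(z) = \la(z)\ww(z)$ in $z$, obtaining
\[
M'(z)\ww(z) + M(z)\ww'(z) = \la'(z)\ww(z) + \la(z)\ww'(z).
\]
Multiplying on the left by $\vv(z)^t$ and using $\vv(z)^t M(z) = \la(z)\vv(z)^t$ makes the terms involving $\ww'(z)$ cancel, so that
\[
\vv(z)^t M'(z)\ww(z) = \la'(z)\,\vv(z)^t\ww(z) = \la'(z),
\]
where the last equality uses the normalization $\langle \vv(z),\ww(z)\rangle=1$.

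The only delicate point is the analyticity of the eigenvectors; once we have the contour-integral formula for $\Pb(z)$, everything else is bookkeeping, and the derivative identity is a standard Hellmann--Feynman-type computation that requires no more than the product rule.
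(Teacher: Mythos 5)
Your proof is correct, and indeed more detailed than the paper's on the analyticity claims (for which the paper just cites a reference). For the derivative formula, however, you take a genuinely different route. You differentiate the eigenvector equation $M(z)\ww(z) = \la(z)\ww(z)$ and then hit it with $\vv(z)^t$ to cancel the $\ww'(z)$ terms — the standard Hellmann--Feynman computation, which requires that you have already established differentiability of $\ww(z)$. The paper instead uses the algebraic identity
\[
\vv(z_0)^t\bigl(M(z)-M(z_0)\bigr)\ww(z) = \bigl(\la(z)-\la(z_0)\bigr)\,\vv(z_0)^t\ww(z),
\]
divides by $z-z_0$, and lets $z\to z_0$; since $\vv(z_0)^t\ww(z)\to 1$, this yields $\la'(z_0)=\vv(z_0)^t M'(z_0)\ww(z_0)$ using only \emph{continuity} of $\ww$ (plus analyticity of $M$ and $\la$), not differentiability of the eigenvectors. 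So the paper's trick is slightly more economical in its hypotheses, while your version is the textbook perturbation-theory argument and is arguably more transparent once the analyticity of the eigenvectors has been secured via the Riesz projector — which you do carefully and the paper does not spell out.
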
 

\begin{proof}
The fact that $\la(z)$, $\vv(z)$ and $\ww(z)$ are analytic functions is
straightforward; compare e.g. with \cite[Lemma 7.6]{La2}.
Let $z, z_0 \in (\alpha\,,\,\beta)$.
Then
$$
\vv(z_0)^t\bigl( M(z) - M(z_0) \bigr) \ww(z) = \la(z) - \la(z_0)\,.
$$
Dividing by $z-z_0$ and letting $z \to z_0\,$, we get the formula for
$\la'(z)$.
\end{proof}

\begin{pro}\label{pro:G-expand} For the Green function associated with
the random walk, there is the following alternative between cases (a), (b) or
(c), where $R$ is as in Proposition \ref{pro:genfunprop}(a).

\smallskip

\noindent
\emph{(a)} $\;R_{\mu} < R$, and for every pair $x,y \in \XX$, the singularity
$z=R_{\mu}$ is a simple pole of $G(x,y|z)$.

\smallskip

\noindent
\emph{(b)} $\;R_{\mu} = R$, and for every pair $x,y \in \XX$,
there are functions $g_{x,y}(z)$ and 
$h_{x,y}(z)$, which near $z=R$ are analytic and 
real-valued for real $z$, such that $h_{x,y}(R) > 0$ and the identity
$$
G(x,y|z)= g_{x,y}(z) + h_{x,y}(z)\big/\sqrt{R_{\mu}-z} 
$$
is valid in a complex neighbourhood of $R_{\mu}$ in $\C$, except for real 
$z > R_{\mu}\,$.
 
\smallskip

\noindent
\emph{(c)} $\;R_{\mu} = R$, and for every pair $x,y \in \XX$,
there are functions $g_{x,y}(z)$ and 
$h_{x,y}(z)$, which near $z=R$ are analytic and 
real-valued for real $z$, such that $h_{x,y}(R) > 0$ and the identity
$$
G(x,y|z) = g_{x,y}(z)- h_{x,y}(z)\,\sqrt{R_{\mu}-z}
$$
is valid in a complex neighbourhood of $R_{\mu}$ in $\C$, except for real 
$z > R_{\mu}\,$.
\end{pro}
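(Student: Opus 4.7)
The plan is to first prove the trichotomy for pairs $x, y_0 \in \partial_{\Ecal} \XX$ by Perron--Frobenius analysis of the matrix $Q(z)$ from Lemma \ref{lem:irred}, then extend to arbitrary pairs via the tree-set modification of Proposition \ref{pro:goodcuts}(3). For $x, y_0 \in \partial_{\Ecal} \XX$, the identity $G(x, y_0|z) = f_{T_{x,y_0}}(z)$ makes $G(x,y_0|z)$ the $T_{x,y_0}$-component of $\ff(z) = (I - Q(z))^{-1}\ee$, where $\ee$ is the standard basis vector at $T_{y_0, y_0}$. Let $\la(z)$ denote the Perron--Frobenius eigenvalue of the irreducible non-negative matrix $Q(z)$, with strictly positive left/right eigenvectors $\vv(z), \ww(z)$ as in Lemma \ref{lem:laprime}. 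Since the entries of $Q(z)$ are power series in $z$ with non-negative coefficients and some strictly positive ones, Lemma \ref{lem:laprime} gives $\la'(z) = \vv(z)^t Q'(z)\ww(z) > 0$ on $(0, R)$, so $\la$ is continuous and strictly increasing on $[0, R]$ with $\la(0) = 0$.

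Three cases arise, according to whether $\la$ reaches $1$ strictly below, exactly at, or strictly above $R$. Case (a): $\la(z_0) = 1$ for some $z_0 \in (0, R)$; then $Q$ is analytic at $z_0$ and $1 - \la(z)$ has a simple zero there, so $\ff(z)$ has a simple pole at $z_0 = R_\mu$. Case (b): $\la(R) = 1$. Using the decomposition $Q(z) = A(z) - \sqrt{R-z}\,B(z)$ of Lemma \ref{lem:irred}, one finds $Q'(z) \sim B(R)/(2\sqrt{R-z})$, so Lemma \ref{lem:laprime} yields $\la'(z) \sim c/(2\sqrt{R-z})$ with $c = \vv(R)^t B(R) \ww(R) > 0$ (by strict positivity of the PF eigenvectors and $B(R) \ne 0$). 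Integration gives $1 - \la(z) = c\sqrt{R-z} + O(R-z)$, and the spectral representation $(I - Q(z))^{-1} = \ww(z)\vv(z)^t / (1 - \la(z)) + \text{(analytic remainder)}$ leads to $\ff(z) \sim \ww(R)\,\langle \vv(R),\ee\rangle /(c\sqrt{R-z})$, with strictly positive coefficient in every coordinate. Case (c): $\la(R) < 1$; then $I - A(R) = I - Q(R)$ is invertible, and expanding $[I - A(z) + \sqrt{R-z}\,B(z)]^{-1}$ in $u = \sqrt{R-z}$ yields $\ff(z) = g(z) - u\,h(z) + O(u^2)$ with $h(R) = (I - A(R))^{-1} B(R)(I - A(R))^{-1}\ee$. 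Strict positivity of each coordinate of $h(R)$ follows because $(I - A(R))^{-1} = \sum_{k \ge 0} A(R)^k$ has strictly positive entries (by irreducibility of $A(R) = Q(R)$), $B(R)$ is non-negative and non-vanishing, and $\ee \ne 0$. In cases (b) and (c) we have $R_\mu = R$, while case (a) gives $R_\mu = z_0 < R$.

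For arbitrary $x, y \in \XX$, I would apply Proposition \ref{pro:goodcuts}(3) to produce a rooted, oriented, tesselating, irreducible tree set $\ol\Ecal$ that satisfies the connectedness assumptions of \ref{ass:umptions} and has $\{x, y\} \subset \partial_{\ol\Ecal} \XX$. Applying the analysis of the preceding paragraphs to $\ol\Ecal$, with its own radius $\ol R$, produces one of the three expansions for $G(x,y|z)$. Since each singularity type at the intrinsic radius $R_\mu$ (simple pole, $1/\sqrt{\cdot}$, $\sqrt{\cdot}$) is a tree-set-independent property of $G(x, y|z)$, and since $R_\mu \le R$ always (as $p_{C_i}^{(n)}(x, y) \le p^{(n)}(x, y)$), the case obtained for $\ol\Ecal$ matches the one determined by the original $\Ecal$ on pairs in $\partial_{\Ecal} \XX$.

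The main technical obstacle is case (b), where the square-root singularity of $Q(z)$ at $R$ forces the careful integration of the blown-up $\la'(z)$ to produce the leading behavior $1 - \la(z) \sim c\sqrt{R-z}$, and where strict positivity of the leading coefficient $h_{x,y}(R) > 0$ in both cases (b) and (c) must be verified via the full force of irreducibility together with the non-vanishing of $B(R)$.
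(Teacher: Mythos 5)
Your Perron--Frobenius analysis of $Q(z)$ for pairs $(x,y_0)$ with $x\in\partial_{\Ecal}\XX$ is essentially the right route and matches the paper's overall structure (cases determined by $\la(R)\gtrless 1$, strict positivity from irreducibility). Two remarks.

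For case (b), you obtain $1-\la(z)\sim c\sqrt{R-z}$ by integrating the blown-up $\la'(z)$ and then invoke a spectral representation ``$\ww(z)\vv(z)^t/(1-\la(z))+\text{analytic remainder}$.'' Near $z=R$ the complementary resolvent is \emph{not} analytic in $z$ (it has the same square-root branch point as $Q(z)$), so this phrase needs qualification. The paper sidesteps the issue by substituting $\zf=\sqrt{R-z}$ once and for all: then $\wt Q(\zf)$ is analytic near $\zf=0$, Lemma~\ref{lem:laprime} gives $\wt\la'(0)<0$ directly, and the whole inversion $(I-\wt Q(\zf))^{-1}=\zf^{-1}\wt B(\zf)$ takes place among genuinely analytic objects. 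Your argument can be repaired by working in the variable $\zf$ throughout, but as written it glosses over this.

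The substantive gap is in the extension to arbitrary $x,y\in\XX$. You invoke Proposition~\ref{pro:goodcuts}(3) to build $\ol\Ecal$ with $x,y\in\partial_{\ol\Ecal}\XX$ and apply the $Q$-matrix analysis there, which is fine, but then claim that ``the case obtained for $\ol\Ecal$ matches the one determined by the original $\Ecal$.'' The reason you give --- that the singularity type of $G(x,y|z)$ at $R_\mu$ is intrinsic to the power series --- only shows that a \emph{given} pair cannot receive two different types; it does \emph{not} show that two \emph{different} pairs $(x,y)$ and $(x',y')$ receive the same type. Moreover, the analysis with $\ol\Ecal$ pins down expansions only for pairs of the form $(\cdot,y_0')$ with a single chosen $y_0'$, so there need not be a common pair to which both analyses apply. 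The paper closes this by a Harnack-type comparison: by strong connectedness of $\XX$ there is $C>0$ with $G(x,y|z)\ge C\,G(x',y'|z)$ and conversely for $z\in[1/2,R_\mu)$, so $\lim_{z\to R_\mu}G(x,y|z)/G(x',y'|z)$ is bounded and bounded away from $0$, which forces all pairs into the same one of the three cases. Without this step your proof establishes the trichotomy pairwise but not that the alternative holds uniformly over $\XX\times\XX$.
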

 
\begin{proof} The elements of the vector $\ff(z)$ in equation
\eqref{eq:linsyst} are the functions $G(x,y_0|z)$,
where $x \in \partial_{\Ecal}X$. We shall apply the last lemma to
$Q(z)$.
For $0 \le z \le R$, let $\la(z)$ be its largest real 
eigenvalue. If $z=0$ then $Q(z)$ is the zero matrix, so that $\la(0)=0$.
Otherwise, by Lemma \ref{lem:irred}, the Perron-Frobenius theorem applies,
and $\la(z)$ is a simple zero of the characteristic polynomial 
$\chi(\la,z) = \det \bigl(\la\cdot I - Q(z)\bigr)$, where $I$ is
the identity matrix over~$\V_{\! 0}\,$.

For $z \in (0\,,\,R)$, we can write 
$$
\chi(1,z) = \chi(1,z) - \chi \bigl(\la(z),z\bigr) 
= \bigl(1-\la(z)\bigr)\,\eta(z)\,,
$$ 
where $\eta(z)$ is analytic and non-zero.

Now let $A(z)$ be the ``adjunct'' matrix of $I-Q(z)$, whose $(T,U)$-entry
is
$$
(-1)^{\pm 1} \det\bigl(I-Q(z) | U,T\bigr)\,,
$$ 
with the row of $U$ and column of $T$ deleted from the matrix $I-Q(z)$, and
the sign according to the parity of the position of $(U,T)$.
Then $A(z)$ is real-analytic for $z \in (0\,,\,R)$, and
$\bigl(I-Q(z)\bigr)A(z) = \chi(1,z) \cdot I$.

\smallskip

(a) If $\la(R) > 1$ then there is a unique $z_0 \in (0\,,\,R)$
such that $\la(z_0)=1$. For $z < z_0\,$, the matrix $I - Q(z)$ is invertible,
whence $\ff(z)$ is analytic.

Now note that $Q'(z) \ge \ol P$, where $\ol P$ is the irreducible
matrix defined in the proof of Lemma \ref{lem:irred}. It is the restriction
of the transition matrix of our random walk to $\partial_{\Ecal}X\,$.

In particular, 
$\la'(z_0) > 0$ by Lemma \ref{lem:laprime}. Therefore 
$1- \la(z) = (z_0-z)\gamma(z)\,$,
where $\gamma(z)$ is analytic near $z=z_0$ and $\gamma(z_0) > 0$.
We can write for $z$ near $z_0$
$$
\begin{aligned}
\bigl(I-Q(z)\bigr)^{-1} &=\frac{1}{\chi(1,z)}A(z) 
= \frac{1}{1-\la(z)}\frac{1}{\eta(z)} A(z) \\
&= \frac{1}{z_0-z}B(z)\,, \quad \text {where}\\
B(z) &= \frac{1}{\gamma(z)\eta(z)} A(z)  
\end{aligned}
$$
If $z$ is close to $z_0$ then $A(z)$ is close to $A(z_0)\,$. This last
matrix is strictly positive in each entry, since $A(z_0)= 
\frac{\partial}{\partial\la}\chi(1,z_0) \cdot \ww(z_0)\vv(z_0)^t$, where $\vv(z_0)_t$
and $\ww(z_0)$ are the left and right Perron-Frobenius eigenvectors of
$Q(z_0)$, see \cite{Se}. Therefore, for real $z$ close to
$z_0$, the matrix $B(z)$ has all entries strictly positive.
We obtain near $z_0$
$$
\ff(z) = \frac{1}{z_0-z}B(z)\ee\,. 
$$
Since $R_{\mu}$ is the smallest positive singularity of each of the functions
$G(x,y|z)$, we see that $R_{\mu}=z_0$, which is a simple pole of $G(x,y_0|z)$,
where $x \in  \partial_{\Ecal} X$.

\smallskip

We shall argue at the end of this proof that the same must be true for all
$x,y \in X$; compare once more with \cite{La2}.

\smallskip

(b) Suppose next that $\la(R)=1$. Then $I-Q(z)$ is invertible for all
$z \in [0\,,\,R)$, and $\ff(z)$ is analytic for all those $z$. Since
$R_{\mu} \le R$ always, we have $R_{\mu}=R\,$.

Now we substitute $\zf = \sqrt{R-z}$ (for complex $z$, unless $z > R$
is real). Then the expansion of Proposition \ref{pro:genfunprop}(b),
for $V \in \V_{\!\text{\rm ess}}$ instead of $T$, can be written  as 
$$
\begin{gathered}
f_V(z) = \wt f_V\bigl(\sqrt{R-z}\,\bigr)\,,\quad\text{where}\quad
\wt f_V(\zf) = \wt g_V(\zf) - \zf \,\wt h_V(R-\zf^2) \quad  \text{with}\\
\wt g_V(\zf) = g_V(R-\zf^2) \AND \wt h_V(\zf) = h_V(R-\zf^2).
\end{gathered}
$$
$\wt f_V(\zf)$ is an analytic function near the origin, and 
$\wt f_V'(0) = -h_T(R) < 0$. When we carry over this subsitution to the matrix
$Q(z)$, then we find near $z=R$ that 
$Q(z) = \wt Q\bigl(\sqrt{R-z}\,\bigr)\,$, where the matrix $\wt Q(\zf)$
is analytic near $\zf=0$, non-negative irreducible for $\zf > 0$, and 
$$
\wt Q'(0) = \bigl( \wt q{\,}'_{T,U}(0) \bigr)_{T, U \in \V_{\! 0}}
\quad \text{with}\quad
\wt q{\,}'_{T,U}(0) = - \sum_{T \vdash aVbU} \mu(a)\mu(b)\, R^2 \, h_V(R)\,.
$$
As above, the last sum is over all productions $T \vdash aVbU$
with $a, b \in \Si$, $V \in \V_{\!\text{\rm ess}}\,$. Since
there is at least one such production for some $T, U \in \V_{\! 0}\,$,
we see that $\wt Q'(0)$ is non-positive and strictly negative in some entry.
Lemma \ref{lem:laprime} implies that $\wt\la'(0) < 0$, where $\wt \la(\zf)$
is the largest positive eigenvalue of the non-negative matrix $\wt Q(\zf)$
for $\zf > 0$.

Arguing precisely as in case (a), we now deduce that for $\zf$ near $0$,
$$
\bigl(I-\wt Q(\zf)\bigr)^{-1} = \frac{1}{\zf}\wt B(\zf)\,, 
$$
where $\wt B(\zf)$ is a matrix with analytic entries that are strictly 
positive when $\zf \ge 0$. Re-substituting $\zf = \sqrt{R-z}$, we obtain
the desired expansion for $G(x,y_0|z)$ for $x \in \partial_{\Ecal}X$.

\smallskip

(c) If $\la(R)<1$ then $R_{\mu}=R$ by the same reason as in case (b).
We make the same substitution as in case (b) but observe that
$I - Q(z)$ is invertible for all $z \in [0\,,\,R]$, so that
$I - \wt Q(\zf)$ is invertible for all $\zf$ close to $0$, and 
$$
\bigl(I- \wt Q(\zf)\bigr)^{-1} = \wt B(\zf) 
= \sum_{n=0}^{\infty} \wt Q(\zf)^n\,. 
$$
This is a strictly positive matrix with analytic entries, and once more 
$\ff(z) = \wt B\bigl( \sqrt{R-z} \bigr) \ee$. In order to make sure
that in the proposed singular exapnsion, we really have $h_{x,y_0}(R) > 0$,
we must check that the matrix $\wt B'(0)$ is strictly negative in
each entry:
\begin{equation}\label{Btildepr}
\wt B'(0) = \left(\sum_{n=1}^{\infty} n\wt Q(0)^{n-1}\right) \wt Q'(0)\,.
\end{equation} 
The matrix sum in the parentheses is absolutely convergent and, by
irreducibility of $\wt Q(0)$, strictly positive in each entry.
We have seen above that $\wt Q'(0)$ is non-positive and strictly negative 
in some entry. 

\smallskip

We conclude that the alternative between cases (a), (b) and (c) 
holds for all Green functions $G(x,y_0|z)$, where $x \in \partial_{\Ecal}\XX$.
Given arbitrary $x,y \in \XX$, we know that we can modify $\Ecal$ so that
$x,y \in \partial_{\Ecal}\XX$, and we can set $y_0=x$.

Thus, for every individual choice of $x, y \in X$, one of the three
singular expansions of (a), (b) or (c) is valid. For the following argument, 
compare once more with \cite{La2}.
 
Let $x,y,x',y' \in \XX$. Then, by connectedness of $X$, there are $k, l \in \N$
such that $p^{(k)}(x,x') > 0$ and $p^{(l)}(y',y) > 0$, and 
$p^{(k+n+l)}(x,y) \ge p^{(k)}(x,x')p^{(n)}(x',y')p^{(l)}(y',y)$ for all $n$. 
Therefore, setting $C = C(x,y,x',y') = p^{(k)}(x,x')p^{(l)}(y',y)/2^{k+l}$,
we have $C>0$ and
$$
G(x,y|z) \ge C \,G(x',y'|z) \quad\text{for }\; 1/2 \le z < R\,.
$$ 
An analogous inequality holds when $x,y$ is exchanged with $x',y'$.
That is, 
$$
0 < \lim_{z\to R} G(x,y|z)/G(x',y'|z) < \infty\,.
$$
Thus, $G(x,y|z)$ and $G(x',y'|z)$ cannot have different expansions among the
three possible types. 
\end{proof}

Set $\dsf=2$ if $\XX$ is a bipartite graph (i.e., it has no odd cycles $\equiv$
closed paths with odd length), and $\dsf=1$, otherwise. 
This is the \emph{period} 
of the random walk. Then for every $x,y \in \XX$, we have that 
$p^{(n)}(x,y) > 0$ for all but finitely many $n$ for which $\dsf$ divides 
$n-d(x,y)$. The \emph{strong period} is
$$
\dsf_s = \gcd \{ n \in \N : \inf_{x\in \XX} p^{(n)}(x,x) > 0\}\,,
$$
if the latter set is non-empty. In our case, we always have
$p^{(2)}(x,x) \ge \sum_{a \in \Si} \mu(a)\mu(a^{-1})$, so that
$\dsf_s \in \{1,2\}$.

\begin{lem}\label{lem:strongper} We have $\dsf_s=1$ if and only if there is
a cut $C_i \in \Ecal$ ($i \ge 1$) that contains an odd cycle.
\end{lem}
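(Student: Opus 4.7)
The plan is to prove both implications, with the forward direction using ``deep'' vertices in individual cuts and the reverse direction using finite type plus irreducibility to obtain uniformity of the return probabilities across $\XX$.

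For the forward implication, suppose $\dsf_s = 1$, so there is some odd $n$ with $\inf_x p^{(n)}(x,x) > 0$. Pick any cut $C \in \Ecal$, of some type $i \ge 1$. Because $\XX$ is locally finite (each vertex has degree at most $2|\Si|$), $\partial C$ is finite, and $C$ is infinite and connected, there is a vertex $x \in C$ with $d(x,\partial C) > n$. Any walk of length at most $n$ starting at $x$ cannot reach $\partial C$ and so is confined to $C$. Since $p^{(n)}(x,x) > 0$, such a closed walk of odd length $n$ exists inside $C$; every odd closed walk contains an odd cycle, so $C$, and therefore $C_i$, contains an odd cycle.

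For the converse, assume $C_{i_0}$ contains an odd cycle for some $i_0 \ge 1$. Irreducibility of $\Ecal$ embeds a copy of $C_{i_0}$ (as a proper subcut) inside every $C_i$ with $i \ge 1$, so every such $C_i$ contains an odd cycle; fix one, of length $m_i$. The key step is to extract a uniform bound $D$ such that every $x \in X$ admits an odd closed walk in $\XX$ of length at most $D$. By Assumptions \ref{ass:umptions}, each $x \in X \setminus \partial_{\Ecal}\XX$ belongs to $\partial_{\Ecal} C$ for a unique $C \in \Ecal$ of some type $i$, and the chosen isomorphism $\phi_C$ sends $x$ to a point in the finite connected set $\partial_{\Ecal} C_i$ (connectedness is part of Proposition \ref{pro:goodcuts}). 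Hence inside $C_i$ there is a path from $\phi_C(x)$ to the fixed odd cycle of length at most $L_i := \diam(\partial_{\Ecal} C_i) + d(\partial_{\Ecal} C_i, \text{cycle})$; going out, around the cycle, and back along the reversed path (which exists because $\XX$ is symmetric) produces an odd closed walk of length at most $2L_i + m_i$ in $C_i$, which pulls back through $\phi_C^{-1}$ to an odd closed walk at $x$. Setting $D := \max_i (2L_i + m_i)$, and enlarging $D$ if necessary to handle the finite set $\partial_{\Ecal}\XX$ by walking into an adjacent cut and back, gives the uniform bound.

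It remains to convert this into a uniform lower bound on $p^{(n_0)}(x,x)$ for a single odd $n_0$. Each edge carries probability at least $\mu_{\min} := \min_{a \in \Si} \mu(a) > 0$, so the odd walk of length $n(x) \le D$ at $x$ contributes $p^{(n(x))}(x,x) \ge \mu_{\min}^D$. Fix any odd $n_0 \ge D$. The uniform estimate $p^{(2)}(x,x) \ge \sum_{a \in \Si} \mu(a)\mu(a^{-1})$ noted just before the lemma lets us pad by even-length lazy loops, giving a uniform lower bound on $p^{(n_0 - n(x))}(x,x)$, and hence on $p^{(n_0)}(x,x)$, as $x$ varies. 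Thus $n_0$ lies in the set whose gcd defines $\dsf_s$; together with $2$, already in that set, this forces $\dsf_s = 1$. The main obstacle is precisely the extraction of $D$, where both irreducibility and the connectedness of the pieces $\partial_{\Ecal} C_i$ provided by Proposition \ref{pro:goodcuts} are essential.
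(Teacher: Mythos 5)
Your proof is correct and takes essentially the same approach as the paper: for the forward direction, a vertex sufficiently deep inside a cut $C$ confines any short odd closed walk (which must exist by $\dsf_s=1$) to $C$, and an odd closed walk contains an odd cycle; for the converse, irreducibility and tesselation are used to produce a \emph{uniform} odd closed-walk length, which is what the gcd-condition for $\dsf_s$ requires. The one organizational difference is in the converse: the paper fixes a single type $i$ carrying the odd cycle, shows that every $x\in\XX$ is within bounded distance of a cut of type $i$, walks there, around the cycle, and back, and then pads by going back and forth along one edge to hit a single fixed odd length $m$; you instead use irreducibility to propagate an odd cycle into each representative $C_i$, then build the odd closed walk entirely inside the unique cut $C$ with $x\in\partial_{\Ecal}C$ (via $\phi_C^{-1}$), and you pad lengths with the uniform lower bound on $p^{(2)}(x,x)$. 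Both routes work; your version sidesteps the paper's somewhat terse claim about bounded distance to cuts of one fixed type, at the cost of a slightly heavier padding step. Two small imprecisions worth noting: your bound $L_i$ should be measured with the graph metric of $C_i$ (not of $\XX$) so that the path, and hence the pulled-back walk, really stays inside $C$; and connectedness of $\partial_{\Ecal}C_i$ is convenient but not strictly necessary, since $C_i$ itself is connected and $\partial_{\Ecal}C_i$ is finite.
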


\begin{proof} If $\dsf_s=1$ then there is an odd $m$ such that every vertex
lies on a cycle with length $m$. If $x$ lies in $C_i$ and 
$d(x,\partial C_i) \ge m/2$ then that cycle must lie within $C_i\,$.

Conversely, assume that some $C_i \in \Ecal$ ($i \ge 1$) contains an
odd cycle. Let $r$ be its length. There is $m_1$ such that every 
$y \in \partial C_i$ is at distance at most $m_1$ from that cycle.

Since $\Ecal$ is irreducible and
tesselates $\XX$, there is $m_2$ such that for every $x \in \XX$ there is
a cone $C$ of type $i$ such that $d(x,\partial C) \le m_2$.
We can construct a path 
from $x$ to a copy of our cycle within $C$ which has length  
$s \le m_1+m_2$. Thus, $x$ lies on a cycle (possibly with repeated edges)
with length $r + 2s$. By going back and forth along some edge, this
can be extended to a cycle with length $m= r+2(m_1+m_2)$, wich is odd.
But then $p^{(m)}(x,x) \ge \bigl( \min_{a \in \Si} \mu(a)\bigr)^m$
for every $x$. Therefore $\dsf_s=1$.  
\end{proof} 

Thus, we have three possible cases: $\dsf = \dsf_s = 1$ (in the situation
of Lemma \ref{lem:strongper}), $\dsf = \dsf_s = 2$ (when $X$ is a bipartite
graph), or $\dsf =1$ and $\dsf_s=2$. 

A result of {\sc Cartwright}~\cite{Ca} applies, see 
\cite[Thm. 9.4 \& page 96]{Wbook}. It yields the following.

\begin{lem}\label{lem:goodperiod}
The only singularities of  each of the functions $G(x,y|z)$ on 
the circle of convergence  $\{ z : |z|=R_{\mu}\}$ of these power series 
are $R_{\mu}$ and possibly $-R_{\mu}\,$. In addition,

\smallskip 

\noindent\emph{(a)}
if $\dsf = \dsf_s = 1$ then $R_{\mu}$ is the only singularity, and

\smallskip 

\noindent\emph{(b)} If $\dsf = \dsf_s = 2$ then both $R_{\mu}$ and $-R_{\mu}$
are singularities.
\end{lem}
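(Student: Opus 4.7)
The plan is to invoke the theorem of Cartwright~\cite{Ca}, in the form presented in \cite[Thm.~9.4]{Wbook}, which asserts that for an irreducible random walk with uniformly bounded row sums the singularities of each Green function $G(x,y|z)$ on the circle of convergence $\{|z|=R_{\mu}\}$ are precisely the points $\omega\,R_{\mu}$ as $\omega$ ranges over the $\dsf_s$-th roots of unity. Both hypotheses hold trivially in our setting: $P$ is stochastic (hence has row sums~$1$), and its irreducibility is just the connectedness of $\XX$. Since we already observed that $\dsf_s\in\{1,2\}$, this immediately yields the first assertion of the lemma (the only possible singularities on the circle of convergence are $R_{\mu}$ and $-R_{\mu}$), together with part~(a): when $\dsf_s=1$, the only $\dsf_s$-th root of unity is $1$, and so $R_{\mu}$ is the unique singularity on that circle.

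For part~(b), I would give a direct argument that does not require the more delicate direction of Cartwright's theorem. Bipartiteness of $\XX$ (i.e., $\dsf=2$) implies $p^{(n)}(x,y)=0$ whenever $n\not\equiv d(x,y)\pmod 2$, so the power series \eqref{eq:green} satisfies the functional identity
$$
G(x,y\,|\,-z)\;=\;(-1)^{d(x,y)}\,G(x,y\,|\,z).
$$
Since $R_{\mu}$ is a singularity of $G(x,y|\cdot)$ by Proposition~\ref{pro:G-expand}, this identity forces $-R_{\mu}$ to be one as well.

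The main obstacle is the correct invocation of Cartwright's result; once its hypotheses are checked (which is immediate here), the remaining bookkeeping is short. Note that case (c), where $\dsf=1$ but $\dsf_s=2$, is consistent with the first statement since Cartwright then predicts that both $\pm R_{\mu}$ are singularities, but the lemma does not need to distinguish this case further.
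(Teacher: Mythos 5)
Your approach is the same as the paper's: the paper's proof is just the single line ``A result of Cartwright applies, see [Wbook, Thm.~9.4 \& p.~96],'' and you fill in the bookkeeping. Your direct argument for (b) via the functional identity $G(x,y|-z)=(-1)^{d(x,y)}G(x,y|z)$ is exactly the observation the paper makes in the sentence immediately after the lemma. So the proof is essentially correct, with two imprecisions worth noting.

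First, your paraphrase of Cartwright's result as saying that the singularities on $\{|z|=R_\mu\}$ are \emph{precisely} the points $\omega R_\mu$ with $\omega^{\dsf_s}=1$ is too strong for the graph setting: Proposition~\ref{pro:-G-expand}(1) exhibits a situation with $\dsf_s=2$ where $G(x,y|z)$ is nevertheless analytic at $-R_\mu$. So the correct content is the ``at most'' direction — the singularities lie among $\{\omega R_\mu : \omega^{\dsf_s}=1\}$. Fortunately you only ever invoke that direction (both for the main assertion and for (a)), and you prove (b) by the bipartiteness argument rather than by the converse, so your conclusion is unaffected; but the statement of the black box should be corrected. Second, the hypotheses you check — stochasticity and irreducibility — are not the ones that do the real work. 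The essential hypothesis for this kind of result on a general graph is the existence of some $n$ with $\inf_x p^{(n)}(x,x)>0$, i.e.\ that the strong period $\dsf_s$ is well-defined; ``uniformly bounded row sums'' is automatic for any (sub)stochastic $P$ and by itself would not suffice. This condition is verified just before the lemma via $p^{(2)}(x,x)\ge\sum_{a\in\Si}\mu(a)\mu(a^{-1})>0$, which you implicitly rely on when you quote $\dsf_s\in\{1,2\}$, but you should name it as the operative hypothesis rather than the row-sum bound.
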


In the second of those cases, $G(x,y|\!-\!z) = (-1)^{d(x,y)}\, G(x,y|z)$, 
so that the singular expansion of $G(x,y|z)$ at $z=-R_{\mu}$ is immediate
from the one at $z=R_{\mu}\,$, as described in Proposition \ref{pro:G-expand}.
In the remaining case, this is more complicated.

\begin{pro}\label{pro:-G-expand} Suppose that $\dsf =1$ and $\dsf_s=2$ for our 
random walk. Then for all $x,y \in X$, the Green function has the following 
expansion at $z=-R_{\mu}\,$.

\smallskip

\noindent
\emph{(1)} If $\;R_{\mu} < R$, then $G(x,y|z)$ is analytic at $z=-R_{\mu}\,$.

\smallskip

\noindent
\emph{(2)} If $\;R_{\mu} = R$, then there are functions $\bar g_{x,y}(z)$ and 
$\bar h_{x,y}(z)$, which near $z=-R_{\mu}$ are analytic and real-valued for 
real $z$, such that the 
identity
$$
G(x,y|z)= \bar g_{x,y}(z) - \bar h_{x,y}(z) \,\sqrt{R_{\mu}+z} 
$$
is valid in a complex neighbourhood of $-R_{\mu}$ in $\C$, except for real 
$z < -R_{\mu}\,$.

Furthermore, in case (c) of Proposition \ref{pro:G-expand}, one has
$|\bar h_{x,y}(-R_{\mu})| < h_{x,y}(R_{\mu})$.
\end{pro}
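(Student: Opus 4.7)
My plan is to exploit the parity structure that $\dsf_s=2$ puts on the essential part of the grammar, and then to analyse the matrix $Q(z)$ near $z=-R_\mu$ in parallel with the proof of Proposition~\ref{pro:G-expand}. By Lemma~\ref{lem:strongper}, $\dsf_s=2$ implies that no cut $C_i$ with $i\ge 1$ contains an odd cycle, hence each such $C_i$ is bipartite. Consequently, for $V=T_{x,y}\in\V_{\!\text{\rm ess}}$ the probabilities $p_{C_i}^{(n)}(x,y)$ are supported on one parity $\epsilon_V\in\{0,1\}$, so that $f_V(-z)=(-1)^{\epsilon_V}f_V(z)$ and the expansion of Proposition~\ref{pro:genfunprop}(b) transfers to
\[
f_V(z)=(-1)^{\epsilon_V}\bigl[g_V(-z)-h_V(-z)\sqrt{R+z}\bigr]
\]
in a slit neighbourhood of $-R$. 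Substituting into \eqref{eq:qTU} gives $Q(z)=\bar A(z)-\sqrt{R_\mu+z}\,\bar B(z)$ near $-R_\mu$, where $\bar A$ and $\bar B$ are analytic and the matrix $[\bar B(-R_\mu)]_{T,U}=\sum_{T\vdash aVbU}(-1)^{\epsilon_V}\mu(a)\mu(b)R_\mu^{2}h_V(R_\mu)$ has possibly mixed signs; in particular $|Q(-R_\mu)|\le Q(R_\mu)$ and $|\bar B(-R_\mu)|\le B(R_\mu)$ entrywise.

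In part~(1), $R_\mu<R$, so $Q(z)$ is analytic at $-R_\mu$, and analyticity of $G(x,y_0|z)$ there reduces to invertibility of $I-Q(-R_\mu)$. Wielandt's theorem applied to the irreducible non-negative matrix $Q(R_\mu)$ with Perron eigenvalue $\la(R_\mu)=1$ forces $\rho(Q(-R_\mu))\le 1$, with equality only if $Q(-R_\mu)=e^{i\theta}DQ(R_\mu)D^{-1}$ for a diagonal unitary $D$. Reality restricts $\theta$ to $\{0,\pi\}$ and $D$ to entries $\pm 1$; the case $\theta=\pi$ produces spectrum $-\mathrm{spec}(Q(R_\mu))$, which does not contain~$1$. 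The case $\theta=0$ translates entrywise into a consistent sign assignment $\sigma_T\in\{\pm 1\}$ such that every production contributing to a non-vanishing $q_{T,U}$ carries the same sign $\sigma_T\sigma_U$. The assumption $\dsf=1$ produces an odd cycle in $\XX$, which by Lemma~\ref{lem:strongper} cannot lie in a single $C_i$; this cycle translates via Lemma~\ref{lem:gram}(b) into a closed walk in the dependency digraph whose accumulated sign contradicts the hypothetical $\sigma$, ruling out $\theta=0$. Hence $I-Q(-R_\mu)$ is invertible and $G(x,y_0|z)$ is analytic at $-R_\mu$; modifying $\Ecal$ via Proposition~\ref{pro:goodcuts}(3) so that $x,y\in\partial_{\Ecal}\XX$ extends analyticity to all pairs $x,y\in\XX$.

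In part~(2), $R_\mu=R$. In subcase (c) of Proposition~\ref{pro:G-expand} we have $\la(R)<1$, so $\rho(Q(-R_\mu))\le\la(R)<1$ and $I-Q(-R_\mu)$ is automatically invertible; in subcase (b) the same invertibility follows from the Wielandt/parity argument of part~(1). Setting $\bar\zf=\sqrt{R_\mu+z}$ and proceeding as in the proof of Proposition~\ref{pro:G-expand}(c), the matrix $I-Q(\bar\zf^{2}-R_\mu)$ is analytic at $\bar\zf=0$ and its inverse expands as $(I-Q(-R_\mu))^{-1}+\bar\zf\,(I-Q(-R_\mu))^{-1}\bar B(-R_\mu)(I-Q(-R_\mu))^{-1}+O(\bar\zf^{2})$. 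Reading off the coefficient of $\sqrt{R_\mu+z}$ (after modifying $\Ecal$ so that $x,y\in\partial_{\Ecal}\XX$ with $y_0=y$) yields the asserted expansion with
\[
\bar h_{x,y}(-R_\mu)=\bigl[(I-Q(-R_\mu))^{-1}\bar B(-R_\mu)(I-Q(-R_\mu))^{-1}\ee\bigr]_x,
\]
to be compared with $h_{x,y}(R_\mu)=\bigl[(I-Q(R_\mu))^{-1}B(R_\mu)(I-Q(R_\mu))^{-1}\ee\bigr]_x$ from Proposition~\ref{pro:G-expand}(c). The entrywise dominations give the non-strict inequality $|\bar h_{x,y}(-R_\mu)|\le h_{x,y}(R_\mu)$.

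The main obstacle is to upgrade this last inequality to a strict one. Strictness must come from at least one entry $(T,U)$ along the matrix product at which either $|q_{T,U}(-R_\mu)|<q_{T,U}(R_\mu)$ or $|[\bar B(-R_\mu)]_{T,U}|<[B(R_\mu)]_{T,U}$, i.e.\ from genuine cancellation among the $(-1)^{\epsilon_V}$ contributions. Tracking an odd cycle in $\XX$ supplied by $\dsf=1$ through Lemma~\ref{lem:gram}(b) and the sign rule above, one must locate such a cancellation in an entry that actually contributes to the $x$-th row of the full product. Performing this sign-bookkeeping through the grammar, and verifying that the cancellation is not accidentally killed by the subsequent multiplications, is the delicate step of the argument; it is also the step that fundamentally uses the full tesselation structure of $\Ecal$ rather than merely the existence of some tree set.
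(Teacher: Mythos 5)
Your plan is broadly the same as the paper's: exploit bipartiteness of the cuts to get parity factors $\ep_V$ in the $f_V$, view $Q(z)$ near $-R_\mu$ via $\zf=\sqrt{R_\mu+z}$, show $I-Q(-R_\mu)$ is invertible, and control the coefficient of $\zf$ to compare with the expansion at $R_\mu$. But there are two genuine gaps.

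First, your invertibility argument in part (1) (and the analogous point in subcase (b) of part (2)) silently requires that $Q(R_\mu)$ is \emph{primitive}, not merely irreducible. In the $\theta=\pi$ branch of the Wielandt dichotomy you assert that $-\mathrm{spec}(Q(R_\mu))$ does not contain $1$, but if $Q(R_\mu)$ were irreducible with period~$2$ it would have $-1$ in its spectrum. The paper secures primitivity at the start of the proof by invoking Proposition~\ref{pro:goodcuts}(3) to absorb an odd cycle of $\XX$ into $\partial_{\Ecal}\XX$ (this is possible precisely because $\dsf=1$), which makes the restricted transition matrix $\ol P$ primitive; only then does the whole machinery (strict triangle inequalities, uniqueness of the Perron root on the circle) apply. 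You only modify $\Ecal$ at the end, to put $x,y$ in $\partial_{\Ecal}\XX$, and never arrange for the odd cycle to lie there. Your $\theta=0$ branch has the same dependence: the ``accumulated sign'' contradiction needs the odd cycle to survive into a closed walk within $\V_{\! 0}$, which again requires the odd cycle to sit in $\partial_{\Ecal}\XX$.

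Second, you explicitly leave the strict inequality $|\bar h_{x,y}(-R_\mu)|<h_{x,y}(R_\mu)$ in case (c) unproved, calling it ``the delicate step.'' This is indeed the crux, and your proof is therefore incomplete there. The paper resolves it cleanly: it reduces the claim to the entrywise strict inequality $|\Bss'(0)|<-\wt B'(0)$, factors the derivative as $\Bss'(0)=\bigl(\sum_{n\ge1}n\Qss(0)^{n-1}\bigr)\Qss'(0)$ (and likewise for $\wt B'(0)$), uses $|\Qss'(0)|\le -\wt Q'(0)$ entrywise (exactly the $\ep_V$ bound you have), and gets \emph{strictness} from the first factor via the strict triangle inequality $\bigl|\sum n\,\Qss(0)^{n-1}\bigr| < \sum n\,\wt Q(0)^{n-1}$, which again relies on the primitivity of $\wt Q(0)=Q(R)$ supplied by the odd cycle sitting in $\partial_{\Ecal}\XX$. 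So the missing odd-cycle modification and the missing strictness argument are really the same gap seen twice; once you put the odd cycle into $\partial_{\Ecal}\XX$ and exploit primitivity, both the $\theta=\pi$ exclusion and the strict inequality follow by the standard Perron--Frobenius strict-triangle mechanism, without needing the production-by-production sign bookkeeping you were contemplating.

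As a smaller point of comparison, the paper does not invoke Wielandt's equality theorem; instead it proves a sharper uniform Claim that for all $|z|\le R$ with $z\ne|z|$, every eigenvalue $\la$ of $Q(z)$ satisfies $|\la|<\la(|z|)$, via the decomposition $Q(z)=z\,\ol P+\ol Q(z)$ and the resolvent factorization $\la I-Q(z)=(\la I-z\ol P)\bigl(I-(\la I-z\ol P)^{-1}\ol Q(z)\bigr)$. That route bypasses the case analysis on $\theta$ and $D$ entirely and is what makes the proof self-contained. If you prefer to keep the Wielandt route, you would still need the primitivity input, and the $\theta=0$ sign argument should be written out carefully; it is not obviously shorter than the paper's.
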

 
\begin{proof} We continue to use the notation of the proof of Proposition
\ref{pro:G-expand}. 

Let $x,y \in X$. We know from Lemma \ref{lem:strongper} that each cut $C_i\,$,
$i \ge 1$, is bipartite, while there is some odd cycle in $X$. By  
Proposition \ref{pro:goodcuts}(3), we can assume without loss of generality
that $\Ecal$ is such that this odd cycle as well as $x,y$ are contained in 
$\partial_{\Ecal}X$. We choose $y_0=y$.

\smallskip 

\noindent
\emph{Claim.} For every $z \in \C$ with $|z|\le R$ and $z \ne |z|$,
each eigenvalue $\la$ of $Q(z)$ satisfies $|\la| < \la(|z|)$.

\smallskip in case (c) of Proposition \ref{pro:G-expand},

\noindent
\emph{Proof.} Let $z$ be as stated.
From \eqref{eq:qTU}, we see that $|q_{T,U}(z)| \le q_{T,U}(|z|)$
for all $T, U \in \V_{\! 0}\,$. It follows that $|\la| \le \la(|z|)$
for every eigenvalue $\la$ of $Q(z)$. 

So let $\la \in \C$ with $|\la|= \la(|z|)$. The claim will be proved
when we show that $\la \, I - Q(z)$ is invertible. 

Consider the matrix $\ol P$ of \eqref{eq:barp}.
Since $\partial_{\Ecal} X$ is connected and contains an odd cycle, this is
a \emph{primitive} matrix, that is, there is $n_0$ such the $\ol P^n$
is strictly positive in each entry for all $n \ge 0$. Let $\bar \la$be 
the Perron-Frobenius eigenvalue of $\bar P$. Once more by the 
Perron-Frobenius theorem, all its other eigenvalues have absolute
value $< \bar \la$.

For all $z$, we can decompose $Q(z) = z \,\ol P + \ol Q(z)$,
where the matrix $\ol Q(z)$ is non-zero and non-negative for $z > 0$.
We have $|z| \,\ol P \le Q(|z|)$ elementwise, and the inequality is
strict in some entries. Therefore $|z| \bar \la < \la(|z|)$, and
$\la\,I - z\, \ol P$ is invertible, with
$$
\bigl(\la\,I - z\, \ol P\bigr)^{-1} 
= \sum_{n=0}^{\infty} \frac{z^n}{\la^{n+1}}\ol P^n\,.
$$
Since all entries of $\ol P^n$ are $>0$ for $n \ge n_0$, we have the strict
triangle inequality 
$$
\big|\bigl(\la\,I - z\, \ol P\bigr)^{-1}\big| <  
\sum_{n=0}^{\infty} \frac{|z|^n}{|\la|^{n+1}}\ol P^n = 
\bigl(\la(|z|)\,I - |z|\, \ol P\bigr)^{-1}\,.
$$
Here and below, the absolute value is meant to be taken matrix-element-wise.
Now we write
$$
\la\, I - Q(z) = \bigl(\la\,I - z\, \ol P\bigr)
\Bigl(I - \bigl(\la\,I - z\, \ol P\bigr)^{-1}\ol Q(z)\Bigr)\,.
$$
By the above strict inequality,
$$
\big|\bigl(\la\,I - z\, \ol P\bigr)^{-1}\ol Q(z)\big| <
\bigl(\la(|z|)\,I - |z|\, \ol P\bigr)^{-1} \ol Q(|z|)\,,
$$
a positive matrix whose Perron-Frobenius eigenvalue is $1$
(and the associated eigenvector is $\ww(|z|)$, the Perron-Frobenius 
eigenvector of $Q(|z|)$). We see that every eigenvalue of 
$\bigl(\la\,I - z\, \ol P\bigr)^{-1}\ol Q(z)$ has absolute value $< 1$.
Thus, $\la\, I - Q(z)$ as a product of two invertible matrices is
invertible, proving the claim.
 
\smallskip

We conclude that for $z \in [-R_{\mu}\,,\,0]$, every eigenvalue $\la$
of $Q(z)$ satisfies $|\la|<1$. Therefore $I - Q(z)$ is invertible, 
and the inverse matrix is $\sum_n Q(z)^n$.

\smallskip

In case (1), it follows that $\bigl(I - Q(z)\bigr)^{-1}$ is analytic 
at $z = -R_{\mu}\,$, whence \ref{eq:linsyst} yields that each 
function $G(x,y|z)$, where $x \in \partial_{\Ecal}\XX$ and $y=y_0\,$, 
is analytic at $-R_{\mu}\,$.
Arguing as at the end of the proof of Proposition \ref{pro:G-expand},
it follows that the same is true for arbitrary $x,y \in \XX$.

\smallskip

In case (2), we substitute $\zf = \sqrt{R+z}$ (for compex $z$, unless $z < -R$
is real). Then, again as in the the proof of Proposition \ref{pro:G-expand},
and using the same notation, we have for these $z$ that
$Q(z) = \Qss\bigl(\sqrt{R+z}\,\bigr)\,$, where the matrix 
$\Qss(\zf)$ is analytic and $I-\Qss(\zf)$ is invertible near $\zf=0$. 
We get
$$
\bigl(I- \Qss(\zf)\bigr)^{-1} = \Bss(\zf) 
= \sum_{n=0}^{\infty} \Qss(\zf)^n\,, 
$$
a matrix with analytic entries, and once more 
$\ff(z) = \Bss\bigl( \sqrt{R+z} \bigr) \ee$.
This yields the proposed singular expansion at $-R$. 

In order to show that
$|\bar h_{x,y}(-R_{\mu})| < h_{x,y}(R_{\mu})$ in case (c) of Proposition 
\ref{pro:G-expand}, it is sufficient to show that one has the element-wise
strict inequality
$|\Bss'(0)| < - \wt B'(0)$,
where $\wt B'(0)$ is as in \eqref{Btildepr}. Observe that the functions 
$f_V(z)$, $V \in \V_{\!\text{\rm ess}}$ are either even or odd, because 
each $C \in \Ecal$ is bipartite, that is, $f_V(-z) = \ep_V\,f_V(z)$
with $\ep_V \in \{ \pm 1\}$. It is then a straightforward task to compute
$$
\qss{\,}'_{T,U}(0) = - \sum_{T \vdash aVbU} \mu(a)\mu(b)\, R^2 \,\ep_V \, h_V(R)\,.
$$
By primitivity of $\wt Q(0) = Q(R)$, and since $\Qss(0) = Q(-R)$, 
we have once more the strict triangle inequality
$$
\left| \sum_{n=1}^{\infty} n\Qss(0)^{n-1} \right| < 
\sum_{n=1}^{\infty} n\wt Q(0)^{n-1}\,.
$$
The required inequality follows.

\smallskip

So far, this proof applies to $G(x,y|z)$ for all $x \in \partial_{\Ecal}\XX$ and
$y=y_0\,$.
Since the alternative between the three cases of Proposition  \ref{pro:G-expand}
has been shown to hold for \emph{all} $x,y \in X$, we get that also 
the alternative for the expansions at $-R_{\mu}$ is the same for all $x,y$.
\end{proof}

Now that we know the behaviour of $G(x,y|z)$ at all its singularities (i.e, 
at $R_{\mu}$ and possibly $-R_{\mu}$) on the circle of convergence,
the classical method of Darboux implies the following, compare with
\cite{La1} (and various other references).

\begin{thm}\label{thm:rw}
Suppose that the fully deterministic, symmetric labelled  graph $X$
admits a rooted, oriented tree set $\Ecal$ which
tesselates $X$ and is irreducible. Let $\mu$ be a probability measure
supported by $\Sigma$. Then one of the following alternatives holds
for the associated random walk. 
\begin{enumerate}
\item \hspace*{2cm} $p^{(n)}(x,y) \sim c_{x,y} \,R_{\mu}^{-n} \quad$ 
for all $\; x, y \in \XX$, or \\[4pt] 
\item 
\hspace*{2cm} $p^{(n)}(x,y) \sim c_{x,y} \,R_{\mu}^{-n}\, n^{-1/2} \quad$ 
for all $\; x, y \in \XX$, or  \\[4pt]
\item \hspace*{2cm} $p^{(n)}(x,y) 
\sim \bigl(c_{x,y} + (-1)^n \bar c_{x,y}\bigr)\,R_{\mu}^{-n}\, n^{-3/2} \quad$ 
for all $\; x, y \in \XX$,
\end{enumerate}
as $n \to \infty$ and $\dsf$ divides $n-d(x,y)$.  

Here, $\sim$ is asymptotic equivalence of sequences, that is,
quotients tend to $1$. We have $c_{x,y} > 0$ and $|\bar c_{x,y}| \le c_{x,y}$. 
Furthermore, $\bar c_{x,y}=0$ when $\dsf_s=\dsf$.
\end{thm}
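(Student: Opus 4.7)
The proof will follow from applying Darboux's transfer theorem for asymptotic coefficient extraction (cf.\ \cite[\S VII.6]{FlSe}) to the Green function $G(x,y|z) = \sum_n p^{(n)}(x,y)\, z^n$, by combining Propositions \ref{pro:G-expand} and \ref{pro:-G-expand} with the period information in Lemma \ref{lem:goodperiod}. Since $G(x,y|z)$ is algebraic --- it is a coordinate of the solution vector to the polynomial system \eqref{eq:eqs} induced by the grammar \eqref{eq:grammar} --- it extends analytically beyond the disk $\{|z| \le R_{\mu}\}$ except for slits at the possible singular points $\pm R_{\mu}$ identified in Lemma \ref{lem:goodperiod}. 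This yields the indented ``camembert'' domain on which Darboux's theorem applies term by term.

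The three alternatives of the theorem then correspond bijectively to the three cases of Proposition \ref{pro:G-expand}. Using the standard transfers
\[
[z^n]\frac{1}{1-z/R_{\mu}} = R_{\mu}^{-n},
\qquad [z^n]\frac{1}{\sqrt{1-z/R_{\mu}}} \sim \frac{R_{\mu}^{-n}}{\sqrt{\pi n}},
\qquad [z^n]\sqrt{1-z/R_{\mu}} \sim -\frac{R_{\mu}^{-n}}{2\sqrt{\pi}\,n^{3/2}},
\]
case (a) of the proposition (simple pole at $R_{\mu} < R$) produces alternative (1); case (b) (singularity of type $1/\sqrt{R_{\mu}-z}$) gives (2); and case (c) (type $\sqrt{R_{\mu}-z}$) gives (3). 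Positivity $c_{x,y} > 0$ is inherited from the strict positivity of the Perron-Frobenius matrix $B(R_{\mu})$ in case (a), respectively from $h_{x,y}(R_{\mu}) > 0$ in cases (b) and (c), both established in Proposition \ref{pro:G-expand}.

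The contribution at $-R_{\mu}$ accounts for the possible oscillating term. If $\dsf_s=\dsf=1$, Lemma \ref{lem:goodperiod}(a) forbids a singularity at $-R_{\mu}$, so $\bar c_{x,y}=0$. If $\dsf_s=\dsf=2$, the identity $G(x,y|-z) = (-1)^{d(x,y)} G(x,y|z)$ together with the constraint $2 \mid n-d(x,y)$ shows that on the relevant subsequence $(-1)^n$ is constant, so the $-R_{\mu}$ contribution is absorbed into $c_{x,y}$ and again $\bar c_{x,y}=0$. If $\dsf=1$ and $\dsf_s=2$, Proposition \ref{pro:-G-expand} supplies a $\sqrt{R_{\mu}+z}$ expansion at $-R_{\mu}$, whose Darboux transfer contributes an $(-1)^n R_{\mu}^{-n} n^{-3/2}$ term. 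In alternatives (1) and (2) this contribution is either absent (Proposition \ref{pro:-G-expand}(1)) or of strictly smaller order than the leading term, so the asymptotic form is preserved; in alternative (3) both have the same order $n^{-3/2}$, and the strict inequality $|\bar h_{x,y}(-R_{\mu})| < h_{x,y}(R_{\mu})$ of Proposition \ref{pro:-G-expand} translates directly to $|\bar c_{x,y}| < c_{x,y}$.

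The main technical obstacle will be extending the conclusions from ``boundary'' pairs $x, y_0 \in \partial_{\Ecal} \XX$ --- for which the system \eqref{eq:linsyst} is written explicitly --- to arbitrary $x,y \in \XX$. This is handled as at the end of the proofs of Propositions \ref{pro:G-expand} and \ref{pro:-G-expand}: by Proposition \ref{pro:goodcuts}(3), the tree set $\Ecal$ can be modified so that any prescribed finite subset of $\XX$ lies in $\partial_{\Ecal}\XX$, and the two-sided estimate $C_1\, G(x',y'|z) \le G(x,y|z) \le C_2\, G(x',y'|z)$ for real $z$ near $R_{\mu}$ (with constants depending only on the four vertices) forces the same alternative among (1)--(3), and the same type of singular expansion at $\pm R_{\mu}$, to hold uniformly for every pair $x,y \in \XX$.
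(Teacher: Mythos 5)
Your proposal is correct and follows essentially the same route as the paper, which simply states that ``the classical method of Darboux implies the following'' once the singular expansions at $\pm R_{\mu}$ are in hand. You spell out the Darboux/singularity-analysis transfer coefficients, the case matching with Proposition \ref{pro:G-expand}, the role of Proposition \ref{pro:-G-expand} and Lemma \ref{lem:goodperiod} at $-R_{\mu}$, and the reduction from boundary pairs $x,y_0 \in \partial_{\Ecal}\XX$ to arbitrary $x,y$ via the Harnack-type two-sided bound on Green functions --- all of which is precisely the argument the paper leaves implicit.
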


We remark that one will be able to obtain analogous random walk 
asymptotics on pairs of groups whose
symmetric Schreier graphs have irreducible tree sets, also when the semigroup
presentation $\psi$ and the set $S = \psi(\Si)$ are \emph{not symmetric.}
There are some technical points that appear to make this
quite laborious and space-consuming when one follows the tree set approach,
compare with Remark \ref{rmk:non-symm}. On the other hand,  
here we have made an additional effort in order to clarify what happens
when the random walk is not necessarily strongly aperiodic (the latter means
that $\dsf_s=1$). In fact, those situations can arise quite naturally
when one considers general Schreier graphs.

Theorem \ref{thm:virtfree} yields the following.

\begin{cor}\label{cor:RW1}  
If $\psi$ is a symmetric semigroup
presentation of the virtually free group $G$, and $K$ is a finitely generated
subgroup of $G$, then for every random walk \eqref{eq:transprob}
on the Schreier graph $\XX(G,K,\psi)$, the transition probabilities 
satisfy (1), (2) or (3) of Theorem \ref{thm:rw}.
\end{cor}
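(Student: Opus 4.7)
The plan is to produce a rooted, oriented tree set on the Schreier graph $\XX = \XX(G, K, \psi)$ that tesselates and is irreducible, and then to invoke Theorem \ref{thm:rw}. Since $\psi$ is a symmetric semigroup presentation, $\XX$ is automatically fully deterministic and symmetric, so once the tree set is in hand, Theorem \ref{thm:rw} applies directly and yields the three-case alternative for the transition probabilities $p^{(n)}(x,y)$.

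When $K$ is finitely generated and free, Theorem \ref{thm:virtfree} applies immediately to furnish the required tree set: by Definition \ref{def:fintype}(b), irreducibility presupposes finite type, and finite type in turn presupposes tesselation, so no additional properties need to be verified. For the case of a general finitely generated $K$ (possibly with torsion), I would first reduce to the free case by choosing a free normal subgroup $F \triangleleft G$ of finite index (obtained from any free finite-index subgroup of $G$ by intersecting its conjugates) and setting $K_0 = K \cap F$. Then $K_0$ is free as a subgroup of $F$, finitely generated as a finite-index subgroup of the finitely generated group $K$ (since $[K : K_0] \le [G : F] < \infty$), and normal in $K$ because $F$ is normal in $G$. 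Theorem \ref{thm:virtfree} then gives an irreducible rooted, oriented tree set $\Ecal_0$ on $\XX(G, K_0, \psi)$, and the natural map $K_0 g \mapsto K g$ is a $[K : K_0]$-to-one labelled-graph covering $\XX(G, K_0, \psi) \to \XX(G, K, \psi)$ with deck group $K/K_0$.

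The main obstacle is then to descend $\Ecal_0$ to a tree set on $\XX(G, K, \psi)$ that retains irreducibility and finite type. The strategy, in the spirit of the proof of Proposition \ref{pro:inherit1}, is to modify $\Ecal_0$ so that each cut is a union of $K/K_0$-orbits of vertices, using thinning arguments analogous to those used for $\wt C$ in the proof of Proposition \ref{pro:goodcuts}(3) to ensure that orbits of cuts remain pairwise non-crossing, and then to project the resulting cuts to $\XX(G, K, \psi)$. Irreducibility and finite type of the pushforward follow from the corresponding properties of $\Ecal_0$ together with the finiteness of the deck group. With the irreducible tree set established on $\XX(G, K, \psi)$, Theorem \ref{thm:rw} yields cases (1), (2), or (3) of its asymptotic alternative, completing the proof.
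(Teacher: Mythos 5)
Your first two steps match the paper's proof: pick a free finite-index subgroup $\F \le G$, set $\K = \F \cap K$ (you additionally take $\F$ normal, which the paper does not need), observe $\K$ is finitely generated free of finite index in $K$, and invoke Theorem \ref{thm:virtfree} plus Theorem \ref{thm:rw} for $X = X(G,\K,\psi)$. The divergence is in the third step, and there you have a genuine gap. You propose to push the tree set $\Ecal_0$ on $X$ down to $\bar X = X(G,K,\psi)$ by making the cuts invariant under the deck group $K/\K$ and then projecting; but you only sketch a ``strategy,'' and it is far from routine. The tree set produced by Theorem \ref{thm:virtfree} has no reason to be $K/\K$-invariant, and the thinning in Proposition \ref{pro:goodcuts}(3) enlarges boundaries metrically rather than enforcing equivariance. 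Making a collection of cuts simultaneously deck-invariant, pairwise non-crossing, tesselating, of finite type, and irreducible is a substantial piece of work that you neither carry out nor reduce to a cited lemma, so the proof as written is incomplete.

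The paper avoids this entirely: it does not build a tree set on $\bar X$ at all. Instead it uses the fact that the natural $k$-to-one projection $\si : X \to \bar X$ (with $k=[K:\K]$) gives a disjoint union $L_{\bar o,\bar x} = \biguplus_{x\,:\,\si(x)=\bar x} L_{o,x}$ of path languages, hence
\[
\bar p^{(n)}(\bar o,\bar x) = \sum_{x\,:\,\si(x)=\bar x} p^{(n)}(o,x)\,,
\]
a sum of $k$ terms. Since Theorem \ref{thm:rw} already gives the common asymptotic type (with the same $R_\mu$) for all the $p^{(n)}(o,x)$ on $X$, a finite sum of these inherits the same type, and the conclusion for $\bar X$ follows without any descent of structure on the graph side. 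You would do well to replace your third step with this transition-probability identity; it is both simpler and rigorous, whereas the equivariant tree-set descent would require a new argument in its own right.
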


\begin{proof} Let $\F$ be a free subgroup of $G$ with $[G:\F]<\infty$,
and let $\K = \F \cap K$. Then $k=[K:\K] < \infty$, and $\K$ is free.
By Theorem \ref{thm:virtfree}, the Schreier graph $X = X(G,\K,\psi)$
satisfies all hypotheses of Theorem \ref{thm:rw}. 
Let $\ol X = X(G,K,\psi)$. Write $o = \K$ and $\bar o = K$ for the
root vertices of $X$ and $\bar X$, respectively. For $x = \K g \in X$,
we have the natural projection $\si: X \to \ol X$, 
$x \mapsto \bar x = Kg \in \ol X$, which is $k$-to-one. Let $g_1, \dots, g_k$
be representatives of the right $\K$-cosets in $K$.  

Then we have 
$$ 
L_{\bar o, \bar x} = \{ w \in \Si^* : \psi(w) \in K \} 
= \{ w \in \Si^* : \psi(w) \in Kg_i \;\text{for some}\; i \le k  \} = 
\biguplus_{x \in X \,:\, \si(x) = \bar x} L_{o,x}\,.
$$ 
Therefore the transition probabilities $\bar p^{(n)}(\bar o,\bar x)$
of the random walk on $\bar X$ and $p^{(n)}(o,x)$ of the random walk on $X$ 
are related by
$$
\bar p^{(n)}(\bar o,\bar x) = \sum_{x \in X \,:\, \si(x) = \bar x} p^{(n)}(o,x)\,,
$$ 
a sum over $k$ elements. Thus, $p^{(n)}(\bar o,\bar x)$ inherits the
asymptotic behaviour of the $p^{(n)}(o,x)$, which has one of the
three above types. Since we can use any vertex as the root (which just means
passing to a conjugate of $K$), the result follows.
\end{proof}

It is an easy exercise to construct examples $(G,K)$ where $X(G,K)$
satisfies the assumptions of Theorem \ref{thm:rw}, but $K$ is not finitely
generated. 

By abuse of notation, we also consider $\mu$ as a probability measure on $G$.
In reality, in setting that we have chosen, this is the image under the 
mapping $\psi$ of the measure
$\mu$ on $\Si$. On $G$, the support is the symmetric set $A = \psi(\Si)$,
but $\mu$ itself is not necessarily symmetric. 

\begin{cor}\label{cor:RW2}
Let  $G$ be a finitely generated, virtually free group that is not virtually
cyclic,
and $\mu$ a probability measure on $G$ whose support is finite, symmetric, and
generates $G$. Then $R_{\mu} > 1$, and the transition probabilities of the associated
random walk satisfy
$$
p^{(n)}(x,y) \sim c_{x,y} \,R_{\mu}^{-n}\, n^{-3/2} \quad \text{for all}\quad
x, y \in G,
$$
as $n \to \infty$ (taking into account the parity of $n$, when the period is
$\dsf=2$.)
\end{cor}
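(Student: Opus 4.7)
The plan is to apply Corollary \ref{cor:RW1} with $K = \{1_G\}$. The trivial subgroup is finitely generated and (trivially) free, so the hypotheses are met, and $\XX(G, \{1_G\}, \psi)$ is the Cayley graph of $G$ with respect to $A = \psi(\Si)$. Corollary \ref{cor:RW1} then furnishes one of the three asymptotic regimes of Theorem \ref{thm:rw}; it remains to verify $R_\mu > 1$, to pin down which regime, and to eliminate the oscillating term.

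Two of these points are immediate. First, $R_\mu > 1$: since $G$ is virtually free and not virtually cyclic, it contains a non-abelian free subgroup of finite index, hence is non-amenable, and Kesten's criterion applied to the symmetric, finitely supported generating measure $\mu$ gives $\rho(\mu) = 1/R_\mu < 1$. Second, the disappearance of the oscillating term in (3): the Cayley graph is vertex-transitive, so $p^{(n)}(x,x) = p^{(n)}(e,e)$ for every $x \in G$, whence $\dsf_s = \dsf$, and the last line of Theorem \ref{thm:rw} yields $\bar c_{x,y}=0$.

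The main obstacle is to rule out cases (1) and (2) of Theorem \ref{thm:rw}, i.e., to place the walk in case (c) of Proposition \ref{pro:G-expand}. A convenient criterion is that cases (a) and (b) both imply $\lim_{z\uparrow R_\mu} G(x,x|z) = +\infty$, whereas case (c) is characterized by $G(x,x|R_\mu) < \infty$. Thus it suffices to establish $R$-transience of the walk on $G$. The route is to reduce to a free subgroup $\F \le G$ of finite index and rank $\ge 2$, whose Cayley graph is a $2r$-regular tree: by Theorem \ref{thm:virtfree} and Proposition \ref{pro:inherit1} an irreducible tree set for the Cayley graph of $\F$ induces one on the Cayley graph of $G$. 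On the tree Cayley graph of $\F$, $R$-transience is classical (Sawyer, Gerl, Lalley) and can in any case be recovered within the present framework: the grammar \eqref{eq:grammar} collapses, after identification of isomorphic cuts, to a scalar quadratic equation in $z$ whose discriminant vanishes transversally at $z=R_\mu$, giving the square-root expansion of case (c) and hence $\la(R)<1$ for the matrix $Q(z)$ of \eqref{eq:qTU}. A standard Green-function comparison using the finite-index embedding $\F \hookrightarrow G$ (every excursion on $G$ visits only finitely many $\F$-cosets between returns) transfers $R$-transience from $\F$ to $G$, selecting case (3) and concluding the proof.
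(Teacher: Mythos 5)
Your plan through the first three points is correct and well organized: applying Corollary~\ref{cor:RW1} with $K=\{1_G\}$, getting $R_\mu>1$ from non-amenability via Kesten, and observing that vertex-transitivity of the Cayley graph forces $\dsf_s=\dsf$ and hence $\bar c_{x,y}=0$. That last observation makes explicit something the paper leaves implicit, and it is a genuine addition. Your reduction of the remaining task to establishing $R$-transience, i.e.\ $G(x,x|R_\mu)<\infty$, via the trichotomy of Proposition~\ref{pro:G-expand}, is also sound.

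The gap is in the final paragraph, where you try to prove $R$-transience on $G$ by reducing to a finite-index free subgroup $\F$ and invoking classical $R$-transience for walks on trees. The measure $\mu$ lives on $G$, not on $\F$: the natural walk on $\F$ is the first-return walk, and its increment distribution is \emph{not} finitely supported (the number of $\F$-cosets visited between two consecutive returns is bounded by $[G:\F]$, but the excursion length is not), so the classical Sawyer/Gerl/Lalley results for finite-range walks on free groups do not apply to it. Your intermediate claim that the grammar of \eqref{eq:grammar} ``collapses to a scalar quadratic in $z$'' is also false for a general finite-range $\mu$ -- that picture holds for nearest-neighbour walks on a homogeneous tree, which is exactly why Lalley's analysis in \cite{La1} for genuinely finite-range walks is nontrivial. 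Finally, the ``standard Green-function comparison'' that is supposed to transfer $R$-transience from $\F$ to $G$ is asserted rather than proved; the relationship between $R_\mu$ for the walk on $G$ and the radius of convergence of the induced walk's Green function is precisely the delicate point, and the finite-index inclusion alone does not control it. The paper closes this gap in one line by citing Guivarc'h \cite{Gu} (see \cite[Thm.\ 7.8]{Wbook}), which rules out the asymptotics (1) and (2) of Theorem~\ref{thm:rw} directly on any non-amenable group that is not virtually cyclic. If you replace your last paragraph by that citation, your proof becomes complete and coincides with the paper's.
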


\begin{proof} The fact that $R_{\mu} > 1$ follows from non-amenability of $G$
via a famous theorem of {\sc Kesten}~\cite{Ke}. 
See e.g. \cite[Cor. 12.5]{Wbook}.

It is known from a theorem of Guivarc'h \cite{Gu} that
the asymptotics of (1) and (2) in Theorem \ref{thm:rw} cannot occur on
a virtually free group that is not virtually cyclic. See e.g. 
\cite[Thm. 7.8]{Wbook}.
\end{proof}

As mentioned in the Introduction, this is closely related to the random 
walks on regular languages of \cite{La2}. 
Indeed, it is quite clear that a random walk on a virtually
free group can be interpreted in these terms, since those groups have
a normal form that is regular. (Underneath, there is a relation between
context-free graphs and finite state automata, compare also with 
\cite[\S 5]{CeWo2}.)
In order to apply the result of \cite{La2} to prove Theorem \ref{thm:rw},
one needs to provide a suitable regular normal form and then to work out that
the specific irreducibility hypotheses of
\cite{La2} are satisfied, a task that appears to have the same quintessence as 
proving existence of a tree set which is irreducible, as we have done.

\end{document}